\def\E{\mathbb E}
\newtheorem{lem}{Lemma}
\newtheorem{thm}{Theorem}
\newtheorem{thmletter}{Theorem}
\newtheorem{cor}[thm]{Corollary}
\newtheorem{prop}[thm]{Proposition}
\theoremstyle{definition}
\newtheorem*{remark}{Remark}
\theoremstyle{definition}
\xpatchcmd{\proof}{\itshape}{\normalfont\proofnameformat}{}{}
\newcommand{\proofnameformat}{}
\begin{document}

\renewcommand{\proofnameformat}{\bfseries}

\begin{center}
{\large\textbf{Berry--Esseen smoothing inequality for the Wasserstein metric on compact Lie groups}}

\vspace{5mm}

\textbf{Bence Borda}

{\footnotesize Graz University of Technology

Steyrergasse 30, 8010 Graz, Austria

and

Alfr\'ed R\'enyi Institute of Mathematics

Re\'altanoda utca 13--15, 1053 Budapest, Hungary

Email: \texttt{borda@math.tugraz.at}}

\vspace{5mm}

{\footnotesize \textbf{Keywords:} transport metric, Fourier transform, compact group, random walk, spectral gap, Erd\H{o}s--Tur\'an inequality, equidistribution}

{\footnotesize \textbf{Mathematics Subject Classification (2020):} 43A77, 60B15}
\end{center}

\vspace{5mm}

\begin{abstract}
We prove a sharp general inequality estimating the distance of two probability measures on a compact Lie group in the Wasserstein metric in terms of their Fourier transforms. We use a generalized form of the Wasserstein metric, related by Kantorovich duality to the family of functions with an arbitrarily prescribed modulus of continuity. The proof is based on smoothing with a suitable kernel, and a Fourier decay estimate for continuous functions. As a corollary, we show that the rate of convergence of random walks on semisimple groups in the Wasserstein metric is necessarily almost exponential, even without assuming a spectral gap. Applications to equidistribution and empirical measures are also given.
\end{abstract}

\section{Introduction}

If the Fourier transform of two Borel probability measures on $\mathbb{R}$ are equal, then the measures themselves are also equal. The celebrated Berry--Esseen smoothing inequality is a quantitative form of this fundamental fact of classical Fourier analysis. Given two Borel probability measures $\nu_1$ and $\nu_2$ on $\mathbb{R}$, let $F_j (x) = \nu_j ((-\infty, x])$, $j=1,2$, and define
\[ \delta_{\mathrm{unif}} (\nu_1, \nu_2) = \sup_{x \in \mathbb{R}} \left| F_1 (x)-F_2(x) \right| . \]
\begin{thmletter}[Berry--Esseen smoothing inequality]\label{classicalBerryEsseen} Assume that $|F_2(x)-F_2(y)|\le K |x-y|$ for all $x,y \in \mathbb{R}$ with some constant $K>0$. Then for any real number $T>0$,
\[ \delta_{\mathrm{unif}} (\nu_1, \nu_2) \ll \frac{K}{T} + \int_{-T}^T \frac{|\widehat{\nu_1}(t) -\widehat{\nu_2}(t)|}{|t|} \, \mathrm{d}t \]
with a universal implied constant.
\end{thmletter}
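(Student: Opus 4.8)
The plan is to run Esseen's classical smoothing argument. Write $\Delta = F_1 - F_2$, a bounded function of bounded variation with $\Delta(\pm\infty) = 0$ and $\mathrm{d}\Delta = \mathrm{d}\nu_1 - \mathrm{d}\nu_2$, so that (at least formally) $\widehat{\Delta}(t) = (\widehat{\nu_1}(t) - \widehat{\nu_2}(t))/(\mathrm{i}t)$; in particular $|\widehat{\Delta}(t)| = |\widehat{\nu_1}(t) - \widehat{\nu_2}(t)|/|t|$. The goal is to bound $\eta := \delta_{\mathrm{unif}}(\nu_1,\nu_2) = \sup_x |\Delta(x)|$ by evaluating a band-limited mollification of $\Delta$ at a cleverly chosen point and comparing two estimates for it. Fix the Fej\'er kernel
\[
V_T(y) = \frac{T}{2\pi}\left(\frac{\sin(Ty/2)}{Ty/2}\right)^2 ,
\]
a probability density with $\widehat{V_T}$ supported in $[-T,T]$, $0 \le \widehat{V_T} \le 1$, and with the tail bound $\int_{|y|>a} V_T(y)\,\mathrm{d}y \le 4/(\pi T a)$. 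We may assume $\eta \ge C K/T$ for a large absolute constant $C$, since otherwise the claimed inequality is trivial; this puts us in the only regime where the mollification is useful, as $V_T$ is concentrated on scale $1/T \ll \eta/K$.

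The key geometric input is the Lipschitz hypothesis on $F_2$. Suppose $\sup_x \Delta(x) = \eta$ (the case $\sup_x(-\Delta)(x) = \eta$ is symmetric, now using monotonicity of $F_1$ to the left of an extremal point). Choosing $x_0$ with $\Delta(x_0)$ arbitrarily close to $\eta$, then for every $0 \le s \le \eta/(2K)$, monotonicity of $F_1$ and the Lipschitz bound on $F_2$ give
\[
\Delta(x_0+s) = F_1(x_0+s) - F_2(x_0+s) \ge F_1(x_0) - F_2(x_0) - Ks \ge \eta/2 - o(1),
\]
so $\Delta \ge \eta/2 - o(1)$ on an interval of length $\eta/(2K)$, which exceeds $1/T$ in our regime. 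Centering the mollification at $\xi = x_0 + \eta/(4K)$ and splitting $(V_T*\Delta)(\xi) = \int V_T(y)\Delta(\xi - y)\,\mathrm{d}y$ according to whether $|y| \le \eta/(4K)$, where $\Delta(\xi - y) \ge \eta/2 - o(1)$, or $|y| > \eta/(4K)$, where only $\Delta(\xi - y) \ge -\eta$ is available, the tail bound for $V_T$ yields $(V_T*\Delta)(\xi) \ge \eta/2 - O(K/T) - o(1)$.

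On the other hand, $\widehat{V_T*\Delta} = \widehat{V_T}\,\widehat{\Delta}$ is supported in $[-T,T]$, so Fourier inversion gives, for every $\xi$,
\[
|(V_T*\Delta)(\xi)| \le \frac{1}{2\pi}\int_{-T}^{T} |\widehat{V_T}(t)|\,|\widehat{\Delta}(t)|\,\mathrm{d}t \le \frac{1}{2\pi}\int_{-T}^{T} \frac{|\widehat{\nu_1}(t) - \widehat{\nu_2}(t)|}{|t|}\,\mathrm{d}t .
\]
Comparing with the lower bound and letting the auxiliary error tend to $0$ gives $\eta \ll K/T + \int_{-T}^T |\widehat{\nu_1}(t) - \widehat{\nu_2}(t)|/|t|\,\mathrm{d}t$. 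Two points need routine care: the supremum defining $\eta$ need not be attained, so one works with near-extremizers and passes to a limit; and the identity $\widehat{\Delta}(t) = (\widehat{\nu_1}(t) - \widehat{\nu_2}(t))/(\mathrm{i}t)$ used in the last display is immediate by integration by parts when $\Delta \in L^1(\mathbb{R})$, and in general one may assume the right-hand side of the theorem is finite (otherwise nothing to prove) and identify $V_T*\Delta$ with $\frac{1}{2\pi}\int_{-T}^T \widehat{V_T}(t)\widehat{\Delta}(t)e^{\mathrm{i}\xi t}\,\mathrm{d}t$ by a truncation approximation. The genuinely delicate step is the quantitative balance in the mollification: one must ensure the positive mass of $\Delta$ over the interval of length $\sim \eta/K$ dominates the error from the tails of $V_T$, which is precisely why the trivial regime $\eta \lesssim K/T$ must be discarded at the start and why a band-limited kernel with power-law (not merely $L^1$) tail decay on scale $1/T$ is the right tool.
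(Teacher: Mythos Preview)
The paper does not actually prove Theorem~A; it is quoted as classical background with a reference to Petrov, and the only remark the paper makes about its proof is in Section~\ref{proofs}, noting that the standard argument smooths with an approximate identity whose Fourier transform is the ``rooftop function'' $\max\{1-|t|/T,0\}$. Your proof is precisely this classical Esseen argument---the Fej\'er kernel $V_T$ you chose has exactly that rooftop Fourier transform---and it is correct as written, including the handling of near-extremizers and the case split on the sign of the extremum.
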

\noindent In the terminology of probability theory, $F_j (x)$ is the distribution function of $\nu_j$; the Fourier transform $\widehat{\nu_j}(t)= \int_{\mathbb{R}}e^{itx}\, \mathrm{d}\nu_j (x)$ is the characteristic function of $\nu_j$; finally, $\delta_{\mathrm{unif}}$ is the uniform metric (or Kolmogorov metric) on the set of probability distributions. For somewhat sharper forms of Theorem \ref{classicalBerryEsseen} see Petrov \cite[Chapter 5.1]{Petrov}.

Similar smoothing inequalities are known for several other probability metrics on $\mathbb{R}$, see Bobkov \cite{Bobkov} for a survey. Some, but not all require a smoothness assumption on one of the distributions; for instance, Theorem \ref{classicalBerryEsseen} is usually formulated under the assumption that $F_2$ is differentiable and $|F_2'(x)|\le K$. A common feature of such results is that the distance of $\nu_1$ and $\nu_2$ in some probability metric is bounded above by the sum of two terms depending on a free parameter $T>0$: one term decays as $T$ increases, and the other term depends on the Fourier transforms of $\nu_1$ and $\nu_2$ only on the interval $[-T,T]$.

Berry--Esseen type smoothing inequalities are known in other spaces as well. The first multidimensional version, an upper bound for the uniform metric on $\mathbb{R}^d$ is due to von Bahr \cite{vonBahr}. Niederreiter and Philipp proved an analogous result for two Borel probability measures $\nu_1$ and $\nu_2$ on the torus $\mathbb{R}^d/\mathbb{Z}^d$. By identifying $\mathbb{R}^d/\mathbb{Z}^d$ with the unit cube $[0,1)^d$, we can define the uniform metric as $\delta_{\mathrm{unif}} (\nu_1, \nu_2) = \sup_{x \in [0,1]^d} |\nu_1 (B(x)) - \nu_2(B(x))|$, where $B(x)=[0,x_1)\times \cdots \times [0,x_d)$. The Fourier transform is now $\widehat{\nu_j}(m)=\int_{\mathbb{R}^d/\mathbb{Z}^d} e^{-2\pi i \langle m,x \rangle} \, \mathrm{d} \nu_j(x)$, $m \in \mathbb{Z}^d$. Let $\mu_{\mathbb{R}^d/\mathbb{Z}^d}$ be the normalized Haar measure, and let $\| m \|_{\infty}=\max_{1 \le k \le d} |m_k|$.
\begin{thmletter}[Niederreiter--Philipp \cite{Niederreiter}]\label{NiederreiterBerryEsseen} Assume that $\nu_2(B) \le K \mu_{\mathbb{R}^d/\mathbb{Z}^d}(B)$ for all axis parallel boxes $B \subseteq [0,1)^d$ with some constant $K>0$. Then for any real number $M>0$,
\[ \delta_{\mathrm{unif}} (\nu_1, \nu_2) \ll \frac{K}{M} + \sum_{\substack{m \in \mathbb{Z}^d \\ 0<\| m \|_{\infty}<M}} \frac{|\widehat{\nu_1}(m)-\widehat{\nu_2}(m)|}{\prod_{k=1}^d \max \{|m_k|, 1\}} \]
with an implied constant depending only on $d$.
\end{thmletter}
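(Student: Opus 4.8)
The plan is to run the classical smoothing argument underlying the Erd\H{o}s--Tur\'an--Koksma inequality — which is exactly the case $\nu_2 = \mu_{\mathbb{R}^d/\mathbb{Z}^d}$, $K = 1$ — with the regularity hypothesis on $\nu_2$ playing the role that the identity $\nu_2 = \mu_{\mathbb{R}^d/\mathbb{Z}^d}$ plays there. First I would discard small $M$: applied to $B = [0,1)^d$ the hypothesis forces $K \ge 1$, so for $M \le 2$ the left-hand side is $\le 1 \ll K/M$ and nothing is needed. Then fix $x \in [0,1]^d$, write $B = B(x)$, and aim to bound $|\nu_1(B) - \nu_2(B)|$ by the right-hand side with a $d$-dependent constant, uniformly in $x$.

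The core is a pair of band-limited trigonometric polynomials $P^\pm$ on $\mathbb{R}^d/\mathbb{Z}^d$ with $\widehat{P^\pm}(m) = 0$ for $\|m\|_\infty \ge M$ and $P^- \le \mathbf{1}_B \le P^+$, built from the one-dimensional Beurling--Selberg--Vaaler extremal polynomials. For the majorant take $P^+ = \prod_{k=1}^d f_k^+$, with $f_k^+$ the Vaaler majorant of degree $< M$ of the arc $[0,x_k)$ in the $k$-th coordinate; since majorants of indicators are nonnegative, $P^+ \ge \prod_k \mathbf{1}_{[0,x_k)} = \mathbf{1}_B$ for free. For the minorant, start from the elementary pointwise bound $\mathbf{1}_B(y) \ge 1 - \sum_{k=1}^d \mathbf{1}_{[x_k,1)}(y_k)\prod_{i<k}\mathbf{1}_{[0,x_i)}(y_i)$ and replace every indicator on the right by its Vaaler majorant, obtaining a band-limited $P^- \le \mathbf{1}_B$. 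The one-dimensional decay $|\widehat{f^\pm}(n)| \ll \min\{1, |n|^{-1}\}$ multiplies out, in both cases, to $|\widehat{P^\pm}(m)| \ll_d \prod_{k=1}^d \max\{|m_k|, 1\}^{-1}$.

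The key estimate is then $\int (P^+ - \mathbf{1}_B)\,\mathrm{d}\nu_2 \ll_d K/M$ and $\int (\mathbf{1}_B - P^-)\,\mathrm{d}\nu_2 \ll_d K/M$. Expanding the products ($P^+$ by telescoping, $P^-$ termwise), each quantity splits into a main part, in which every Vaaler majorant is replaced back by the indicator it majorizes, and remainder terms, each carrying a factor $f^\pm - \mathbf{1}$ of integral $\ll 1/M$. The main part vanishes: it is exactly the partition $[0,1)^d = B \sqcup \bigsqcup_{k=1}^d B_k'$ into the axis-parallel boxes $B_k' = \{y : y_k \in [x_k,1),\ y_i \in [0,x_i)\ \text{for}\ i < k\}$, so $\nu_2(B) + \sum_k \nu_2(B_k') = 1$ forces cancellation. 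Each remainder is controlled by dominating the small factor $f^\pm - \mathbf{1}$ by a nonnegative combination of arc-indicators of total length $\ll 1/M$, using the uniform boundedness of the Vaaler polynomials for the remaining factors, and then invoking $\nu_2(B') \le K\mu_{\mathbb{R}^d/\mathbb{Z}^d}(B')$ box by box (splitting any arc wrapping around the torus into two genuine subintervals of $[0,1)$) to trade $\nu_2$-mass for $K$ times $\mu$-mass.

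Finally Fourier duality closes it: from $P^- \le \mathbf{1}_B \le P^+$,
\[ \nu_1(B) - \nu_2(B) \le \int P^+\,\mathrm{d}\nu_1 - \nu_2(B) = \int P^+\,\mathrm{d}(\nu_1 - \nu_2) + \int (P^+ - \mathbf{1}_B)\,\mathrm{d}\nu_2, \]
and symmetrically $\nu_2(B) - \nu_1(B) \le \int (\mathbf{1}_B - P^-)\,\mathrm{d}\nu_2 + \int P^-\,\mathrm{d}(\nu_2 - \nu_1)$. In each, the term $\int P\,\mathrm{d}(\nu_1 - \nu_2)$ has vanishing zero mode (both measures are probability measures), so by the Fourier decay of $P^\pm$ it is $\ll_d \sum_{0 < \|m\|_\infty < M} |\widehat{\nu_1}(m) - \widehat{\nu_2}(m)| / \prod_k \max\{|m_k|, 1\}$; together with the previous step and the supremum over $x$ this yields the theorem. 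I expect the only real work to lie in the third step — identifying the main part with the exact box partition and controlling the remainders uniformly in $x$, which is precisely where the regularity of $\nu_2$ and the $d$-dependence of the constant enter; the one-dimensional Vaaler input ($L^1$-error, Fourier decay, uniform boundedness) is classical and may simply be quoted.
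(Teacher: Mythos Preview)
This theorem is not proved in the paper; it is quoted with attribution to Niederreiter and Philipp as background for the main results. The only information the paper gives about its proof is the remark at the opening of Section~\ref{proofs} that the original argument smooths with the multidimensional Fej\'er kernel, whose Fourier transform is $\prod_{k=1}^d \max\{1 - |m_k|/(M+1), 0\}$.

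Your route via Beurling--Selberg--Vaaler extremal polynomials is a correct alternative. The construction of $P^\pm$, the Fourier decay $|\widehat{P^\pm}(m)| \ll_d \prod_k \max\{|m_k|,1\}^{-1}$, and the closing duality step are all sound. For the third step there is a shortcut that replaces the layer-cake/box-by-box bookkeeping you sketch: the hypothesis $\nu_2(B) \le K\mu(B)$ for all axis-parallel boxes already forces $\nu_2 \le K\mu$ as Borel measures (cover any open set by disjoint half-open dyadic cubes and pass to general Borel sets by outer regularity), so
\[
\int (P^+ - \mathbf 1_B)\, \mathrm{d}\nu_2 \;\le\; K \int (P^+ - \mathbf 1_B)\, \mathrm{d}\mu \;=\; K\Big(\textstyle\prod_k \big(x_k + O(1/M)\big) - \prod_k x_k\Big) \;\ll_d\; K/M,
\]
and similarly for $\mathbf 1_B - P^-$; this also sidesteps the appeal to uniform pointwise bounds on the Vaaler majorants. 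Compared with the Fej\'er-kernel route the paper alludes to, your Vaaler construction produces genuine pointwise majorants and minorants of $\mathbf 1_B$ from the outset, whereas the Fej\'er approach typically manufactures them by first passing to auxiliary inner/outer boxes; the endgame is the same.
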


The goal of this paper is to prove a Berry--Esseen type smoothing inequality in more general compact groups. In this more general setting only those probability metrics remain meaningful whose definition does not rely on concepts such as axis parallel boxes and distribution functions. One of the most important such metrics is the $p$-Wasserstein metric $W_p$. Given a compact metric space $(X,\rho)$ and two Borel probability measures $\nu_1$ and $\nu_2$ on $X$, we define
\[ W_p (\nu_1, \nu_2) = \inf_{\vartheta \in \mathrm{Coup}(\nu_1, \nu_2)} \int_{X \times X} \rho (x,y)^p \, \mathrm{d} \vartheta (x,y) \qquad (0<p \le 1), \]
and
\[ W_p (\nu_1, \nu_2) = \inf_{\vartheta \in \mathrm{Coup}(\nu_1, \nu_2)} \left( \int_{X \times X} \rho (x,y)^p \, \mathrm{d} \vartheta (x,y) \right)^{1/p} \qquad (1<p<\infty). \]
Here $\mathrm{Coup}(\nu_1, \nu_2)$ is the set of couplings of $\nu_1$ and $\nu_2$; that is, the set of Borel probability measures $\vartheta$ on $X \times X$ with marginals $\vartheta (B \times X)=\nu_1 (B)$ and $\vartheta (X \times B) = \nu_2 (B)$, $B \subseteq X$ Borel. Recall that for any $p>0$, $W_p$ is a metric on the set of Borel probability measures on $X$, and it generates the topology of weak convergence. Observe also the general inequalities $W_p(\nu_1, \nu_2) \le W_1 (\nu_1, \nu_2)^p$, $0<p \le 1$ and $W_1(\nu_1, \nu_2) \le W_p (\nu_1, \nu_2)$, $1 \le p < \infty$.

Respecting the philosophy of the Berry--Esseen inequality, we wish to find an upper bound to $W_p (\nu_1, \nu_2)$ depending on the Fourier transform of $\nu_1$ and $\nu_2$ only up to a certain ``level''. For this reason we chose to work with compact Lie groups, where the theory of highest weights provides a suitable framework to formalize the meaning of ``level''. More precisely, our main result applies to any compact, connected Lie group $G$; classical examples include $\mathbb{R}^d/\mathbb{Z}^d$, $\mathrm{U}(d)$, $\mathrm{SU}(d)$, $\mathrm{SO}(d)$, $\mathrm{Sp}(d)$ and $\mathrm{Spin} (d)$. Let $\widehat{G}$ denote the unitary dual, and let $d_{\pi}$, $\lambda_{\pi}$ and $\kappa_{\pi}$ denote the dimension, the highest weight and the negative Laplace eigenvalue of the representation $\pi \in \widehat{G}$, respectively. Further, let $\| A \|_{\mathrm{HS}}=\sqrt{\mathrm{tr} (A^*A)}$ be the Hilbert--Schmidt norm of a matrix $A$. For a more formal setup we refer to Section \ref{notation}. Generalizing our recent result on the torus $G=\mathbb{R}^d/\mathbb{Z}^d$ \cite{Borda}, in this paper we prove the following Berry--Esseen type smoothing inequality for $W_p$ on compact Lie groups.
\begin{thm}\label{WpBerryEsseen} Let $\nu_1$ and $\nu_2$ be Borel probability measures on a compact, connected Lie group $G$. For any $0<p \le 1$ and any real number $M>0$,
\begin{equation}\label{Wp}
W_p (\nu_1, \nu_2) \ll \frac{1}{M^p} + M^{1-p} \left( \sum_{\substack{\pi \in \widehat{G} \\ 0< |\lambda_{\pi}|<M}} \frac{d_{\pi}}{\kappa_{\pi}} \| \widehat{\nu_1}(\pi) - \widehat{\nu_2}(\pi) \|_{\mathrm{HS}}^2 \right)^{1/2}
\end{equation}
with an implied constant depending only on $G$.
\end{thm}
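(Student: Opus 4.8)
The plan is to combine Kantorovich--Rubinstein duality with smoothing by a band-limited kernel and a Fourier decay estimate for H\"older functions. Since $0<p\le 1$, the function $\rho^p$ is again a metric on $G$, and with respect to it $W_p$ is precisely the $1$-Wasserstein metric; hence by Kantorovich--Rubinstein duality it suffices to bound $\int_G f\,\mathrm{d}(\nu_1-\nu_2)$ uniformly over all $f\colon G\to\mathbb{R}$ satisfying $|f(x)-f(y)|\le\rho(x,y)^p$ for all $x,y\in G$. Subtracting a constant --- which changes neither this modulus of continuity nor the integral against the signed measure $\nu_1-\nu_2$ --- we may assume $\int_G f\,\mathrm{d}\mu_G=0$, so that the Fourier coefficient of $f$ at the trivial representation vanishes and $\|f\|_\infty\ll 1$.

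For the free parameter $M>0$ I would introduce a band-limited approximate identity (a Jackson/Fej\'er-type kernel) $K_M\colon G\to[0,\infty)$: a non-negative function with $\int_G K_M\,\mathrm{d}\mu_G=1$, with $\widehat{K_M}(\pi)=0$ whenever $|\lambda_\pi|\ge M$, and spatially concentrated at scale $1/M$ in the sense that $\int_G\rho(e,y)K_M(y)\,\mathrm{d}\mu_G(y)\ll 1/M$, where $e$ is the identity of $G$. The existence of such kernels on a compact connected Lie group is classical; one natural construction is the normalized square modulus of a smoothly frequency-truncated reproducing kernel, whose spatial localization is a standard spectral-projector estimate on the compact manifold $G$. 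From $K_M\ge 0$ and $\int_G K_M\,\mathrm{d}\mu_G=1$ we get the two facts used below: $\|\widehat{K_M}(\pi)\|_{\mathrm{op}}\le 1$ for every $\pi$, and, by H\"older's inequality (here $0<p\le 1$ is used),
\[ \|f-f*K_M\|_\infty\le\int_G\rho(e,y)^p K_M(y)\,\mathrm{d}\mu_G(y)\le\Bigl(\int_G\rho(e,y)K_M(y)\,\mathrm{d}\mu_G(y)\Bigr)^{p}\ll\frac{1}{M^p}. \]

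Now write $\int_G f\,\mathrm{d}(\nu_1-\nu_2)=\int_G(f-f*K_M)\,\mathrm{d}(\nu_1-\nu_2)+\int_G f*K_M\,\mathrm{d}(\nu_1-\nu_2)$. The first term has absolute value $\ll\|f-f*K_M\|_\infty\ll M^{-p}$, the first summand of \eqref{Wp}. In the second term $f*K_M$ has Fourier transform supported on $\{0<|\lambda_\pi|<M\}$, so Parseval's formula gives $\int_G f*K_M\,\mathrm{d}(\nu_1-\nu_2)=\sum_{0<|\lambda_\pi|<M}d_\pi\,\mathrm{tr}\bigl(\widehat f(\pi)\widehat{K_M}(\pi)(\widehat{\nu_1}(\pi)-\widehat{\nu_2}(\pi))^{*}\bigr)$ up to the conventional transpose. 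Estimating each trace by the Hilbert--Schmidt Cauchy--Schwarz inequality, using $\|\widehat f(\pi)\widehat{K_M}(\pi)\|_{\mathrm{HS}}\le\|\widehat{K_M}(\pi)\|_{\mathrm{op}}\|\widehat f(\pi)\|_{\mathrm{HS}}\le\|\widehat f(\pi)\|_{\mathrm{HS}}$, and then applying Cauchy--Schwarz in $\pi$ with the weight $\kappa_\pi$, we obtain
\[ \Bigl|\int_G f*K_M\,\mathrm{d}(\nu_1-\nu_2)\Bigr|\le\Bigl(\sum_{0<|\lambda_\pi|<M}d_\pi\kappa_\pi\|\widehat f(\pi)\|_{\mathrm{HS}}^2\Bigr)^{1/2}\Bigl(\sum_{0<|\lambda_\pi|<M}\frac{d_\pi}{\kappa_\pi}\|\widehat{\nu_1}(\pi)-\widehat{\nu_2}(\pi)\|_{\mathrm{HS}}^2\Bigr)^{1/2}. \]
The second factor is exactly the sum in \eqref{Wp}, so the theorem reduces to the Fourier decay estimate $\sum_{0<|\lambda_\pi|<M}d_\pi\kappa_\pi\|\widehat f(\pi)\|_{\mathrm{HS}}^2\ll M^{2-2p}$ valid for every $f$ with $|f(x)-f(y)|\le\rho(x,y)^p$.

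To prove this estimate I would run the heat semigroup $P_t=e^{t\Delta}$ on $G$, whose Fourier multiplier on $\pi$ is $e^{-t\kappa_\pi}$. As above, $\|f-P_tf\|_\infty\le\int_G\rho(e,y)^p p_t(y)\,\mathrm{d}\mu_G(y)\ll t^{p/2}$, where $p_t$ is the heat kernel and we use the standard moment bound $\int_G\rho(e,y)^2 p_t(y)\,\mathrm{d}\mu_G(y)\ll t$ together with H\"older. By Plancherel, $\sum_\pi(1-e^{-t\kappa_\pi})^2 d_\pi\|\widehat f(\pi)\|_{\mathrm{HS}}^2=\|f-P_tf\|_2^2\ll t^p$; choosing $t$ of order $2^{-j}$ makes $(1-e^{-t\kappa_\pi})^2$ bounded below on the dyadic block $\kappa_\pi\in[2^j,2^{j+1})$, hence $\sum_{\kappa_\pi\in[2^j,2^{j+1})}d_\pi\|\widehat f(\pi)\|_{\mathrm{HS}}^2\ll 2^{-jp}$, and summing $2^j\cdot 2^{-jp}$ over the blocks with $2^j\ll M^2$ (using $1-p\ge 0$) gives the claim; in the borderline case $p=1$ one instead uses directly $\sum_\pi\kappa_\pi d_\pi\|\widehat f(\pi)\|_{\mathrm{HS}}^2=\|\nabla f\|_2^2\le(\mathrm{Lip}\,f)^2\le 1$, avoiding a logarithmic loss. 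Combining the three estimates and taking the supremum over admissible $f$ yields \eqref{Wp}. I expect the main obstacle to be the construction and the two quantitative properties of $K_M$ on a general compact connected Lie group --- above all the spatial concentration at scale $1/M$, which rests on localization estimates for smoothed spectral projectors on $G$ --- and, closely related, the heat-kernel moment bound; once these inputs are in place, the remainder is Kantorovich duality, Plancherel, and two applications of Cauchy--Schwarz.
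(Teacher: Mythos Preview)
Your overall architecture --- Kantorovich duality, a band-limited smoothing kernel, Cauchy--Schwarz with weight $\kappa_\pi$, and a Fourier decay bound $\sum_{|\lambda_\pi|<M}d_\pi\kappa_\pi\|\widehat f(\pi)\|_{\mathrm{HS}}^2\ll M^{2-2p}$ --- is exactly the paper's. The two technical inputs, however, are implemented differently.

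For the kernel, the paper does \emph{not} use a non-negative Jackson-type kernel; it even remarks that Fej\'er-based constructions on general compact connected Lie groups ``seem not to have been worked out in full generality''. Instead it uses the Cartwright--Kucharski kernel: a central trigonometric polynomial $K_M=\sum_{|\lambda_\pi|<M}a_\pi\chi_\pi$, not non-negative, built by transferring a smooth bump on the Cartan subalgebra via the Weyl integral formula. The paper proves by hand that $|a_\pi|\le d_\pi$ (this replaces your $\|\widehat{K_M}(\pi)\|_{\mathrm{op}}\le 1$) and that $\|f-f*K_M\|_\infty\ll g(1/M)$. Your squared-smoothed-spectral-projector kernel should also work, but its spatial localization at scale $1/M$ rests on H\"ormander-type spectral projector estimates --- heavier machinery than the paper's elementary construction.

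For the Fourier decay, the paper avoids the heat semigroup entirely. It takes $h_k=\exp(uX_k)$ for an orthonormal basis $X_1,\dots,X_n$ of $\mathfrak{g}$, expands $\pi(h_k)-I_{d_\pi}=u\,\mathrm{d}\pi(X_k)+O(u^2|\lambda_\pi|^2)$ by a one-step Taylor bound, sums over $k$ so that $\sum_k\mathrm{d}\pi(X_k)^*\mathrm{d}\pi(X_k)=\kappa_\pi I_{d_\pi}$ appears, and chooses $u\sim c/(nM)$. This single-scale argument gives the bound with a constant \emph{uniform in $p\in(0,1]$}. Your dyadic heat argument sums a geometric series with ratio $2^{1-p}$ and therefore produces a constant of order $1/(1-p)$; your separate $p=1$ argument via $\|\nabla f\|_2^2$ does not cover $p$ close to $1$, since for $p<1$ a $p$-H\"older $f$ need not lie in $W^{1,2}$. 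The theorem as stated requires the implied constant to depend only on $G$, so this is a genuine (if easily repaired) gap: either run the paper's direct single-scale argument, or first prove the case $p=1$ and then deduce $0<p<1$ from $W_p\le W_1^p$.
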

\noindent The result holds without any smoothness assumption on $\nu_1$ and $\nu_2$. As the applications in Section \ref{applications} will show, the inequality is sharp up to a constant factor depending on $G$. Our methods do not work when $p>1$; the reason is that the proof is based on Kantorovich duality for $W_p$. Recall that the Kantorovich duality theorem states that for any $0<p\le 1$,
\[ W_p (\nu_1, \nu_2 ) = \sup_{f \in \mathcal{F}_p} \left| \int_G f \, \mathrm{d}\nu_1 - \int_G f \, \mathrm{d}\nu_2 \right| , \]
where, with $\rho$ denoting the geodesic distance on $G$,
\[ \mathcal{F}_p = \left\{ f:G \to \mathbb{R} \, : \, |f(x)-f(y)| \le \rho (x,y)^p \textrm{ for all } x,y \in G \right\} . \]
Theorem \ref{WpBerryEsseen} thus estimates the difference of the integrals of $f$ with respect to $\nu_1$ and $\nu_2$ uniformly in $f \in \mathcal{F}_p$. From our methods it also follows that for any $f \in \mathcal{F}_p$,
\[ \left| \int_G f \, \mathrm{d}\nu_1 - \int_G f \, \mathrm{d}\nu_2 \right| \ll \frac{1}{M^p} + \sum_{\substack{\pi \in \widehat{G}\\ 0<|\lambda_{\pi}|<M}} d_{\pi} \| \widehat{f}(\pi ) \|_{\mathrm{HS}} \cdot \| \widehat{\nu_1}(\pi) - \widehat{\nu_2}(\pi) \|_{\mathrm{HS}} \]
with an implied constant depending only on $G$; see Proposition \ref{smoothingprop}. Hence for a given $f \in \mathcal{F}_p$ whose Fourier transform decays fast enough, the results of Theorem \ref{WpBerryEsseen} can be improved. Fast Fourier decay follows e.g.\ from suitable smoothness assumptions on $f$, see Sugiura \cite{Sugiura}. We mention that by prescribing a higher order modulus of continuity for $f$, the term $1/M^p$ can also be improved. Note that Fourier decay rates play a role in classical Berry--Esseen type inequalities as well: the coefficient $|t|^{-1}$ (resp.\ $\prod_{k=1}^d \max\{ |m_k|,1 \}^{-1}$) in Theorem \ref{classicalBerryEsseen} (resp.\ Theorem \ref{NiederreiterBerryEsseen}) is explained by the fact that the Fourier transform of the indicator function of an interval (resp.\ axis parallel box) decays at this rate.

The most straightforward application of Theorem \ref{WpBerryEsseen} is estimating the rate of convergence of random walks in the $W_p$ metric. Let $\nu^{*k}$ denote the $k$-fold convolution power of $\nu$, and let $\mu_G$ be the Haar measure on $G$. Recall that $\nu^{*k} \to \mu_G$ weakly as $k \to \infty$ if and only if the support of $\nu$ is contained neither in a proper closed subgroup, nor in a coset of a proper closed normal subgroup of $G$; see Stromberg \cite{Stromberg}. Using a nonuniform spectral gap result of Varj\'u, we prove the following application of Theorem \ref{WpBerryEsseen}.
\begin{cor}\label{convergencerate} Let $\nu$ be a Borel probability measure on a compact, connected, semisimple Lie group $G$. If $\nu^{*k} \to \mu_G$ weakly as $k \to \infty$, then
\[ W_1 (\nu^{*k}, \mu_G ) \ll e^{-ck^{1/3}}, \]
where the constant $c>0$ and the implied constant depend only on $G$ and $\nu$.
\end{cor}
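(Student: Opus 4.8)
The plan is to apply Theorem~\ref{WpBerryEsseen} with $p=1$ to the pair $\nu_1=\nu^{*k}$, $\nu_2=\mu_G$, feed in a quantitative spectral gap estimate for the Fourier coefficients $\widehat\nu(\pi)$, and optimize over the free parameter $M$. The starting point is that the hypothesis $\nu^{*k}\to\mu_G$ weakly is equivalent to $\|\widehat\nu(\pi)\|_{\mathrm{op}}<1$ for every nontrivial $\pi\in\widehat G$ (a standard reformulation of Stromberg's criterion recalled above), which for semisimple $G$ is in turn equivalent to $\operatorname{supp}\nu$ generating a dense subgroup of $G$. Varj\'u's nonuniform spectral gap theorem turns this qualitative fact into the uniform bound
\[ \|\widehat\nu(\pi)\|_{\mathrm{op}}\le 1-\frac{c_0}{(2+\log d_\pi)^2}\qquad\text{for all nontrivial }\pi\in\widehat G, \]
with some constant $c_0=c_0(G,\nu)>0$; the power $2$ in the denominator is precisely what will yield the exponent $1/3$.

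Since $\widehat{\nu^{*k}}(\pi)=\widehat\nu(\pi)^k$ while $\widehat{\mu_G}(\pi)=0$ for every nontrivial $\pi$, and since $\|A^k\|_{\mathrm{HS}}^2\le d_\pi\|A\|_{\mathrm{op}}^{2k}$ for any $d_\pi\times d_\pi$ matrix $A$, Theorem~\ref{WpBerryEsseen} with $p=1$ gives, for every real $M\ge 3$,
\[ W_1(\nu^{*k},\mu_G)\ll\frac1M+\left(\sum_{\substack{\pi\in\widehat G\\ 0<|\lambda_\pi|<M}}\frac{d_\pi^2}{\kappa_\pi}\,\|\widehat\nu(\pi)\|_{\mathrm{op}}^{2k}\right)^{1/2}. \]
When $|\lambda_\pi|<M$, the Weyl dimension formula gives $d_\pi\ll_G M^{O_G(1)}$, hence $\log d_\pi\ll_G\log M$, so the spectral gap bound yields $\|\widehat\nu(\pi)\|_{\mathrm{op}}^{k}\le\exp(-c_1k/(\log M)^2)$ with $c_1=c_1(G,\nu)>0$. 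Pulling this factor out of the sum, it remains to bound $\sum_{0<|\lambda_\pi|<M}d_\pi^2/\kappa_\pi$, which is $\ll_G M^{\dim G}$: indeed $|\lambda_\pi|<M$ forces $\kappa_\pi\ll_G M^2$ (the Laplace eigenvalue $\kappa_\pi$ being quadratic in the highest weight $\lambda_\pi$), while $\kappa_\pi\ge\kappa_0>0$ for nontrivial $\pi$, so Weyl's law $\sum_{\kappa_\pi\le N}d_\pi^2\ll_G N^{(\dim G)/2}$ for the Laplacian on $G$ applies. Therefore
\[ W_1(\nu^{*k},\mu_G)\ll\frac1M+M^{(\dim G)/2}\exp\!\left(-\frac{c_1k}{(\log M)^2}\right). \]

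Finally, choosing $M=\exp(\varepsilon k^{1/3})$ turns the right-hand side into $\exp(-\varepsilon k^{1/3})+\exp\!\left(\left(\tfrac{\varepsilon\dim G}{2}-c_1\varepsilon^{-2}\right)k^{1/3}\right)$; taking $\varepsilon=\varepsilon(G,\nu)>0$ small enough that $\varepsilon^3<2c_1/\dim G$ makes both exponents negative constant multiples of $k^{1/3}$, so $W_1(\nu^{*k},\mu_G)\ll e^{-ck^{1/3}}$ for all $k$ with $\varepsilon k^{1/3}\ge\log 3$, and the trivial bound $W_1\le\operatorname{diam}G$ handles the remaining finitely many $k$ after enlarging the implied constant. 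All constants depend only on $G$ and $\nu$: the implied constant in Theorem~\ref{WpBerryEsseen} and the dimension- and weight-counting constants depend only on $G$, while $c_0$, and hence $c_1$ and $\varepsilon$, depends additionally on $\nu$. There is no deep obstacle here --- the corollary is an optimized splicing of Theorem~\ref{WpBerryEsseen} with Varj\'u's estimate --- and the only points genuinely requiring care are invoking Varj\'u's theorem in exactly the needed generality (the weak-convergence hypothesis, which the Fourier reformulation makes available) and tracking the dependence of every constant; the exponent $1/3$ is forced, being $1/(a+1)$ with $a=2$ the power of $\log d_\pi$ in the spectral gap, and arising from balancing the polynomial factor $M^{(\dim G)/2}$ against the merely logarithmic-in-$M$ strength of the gap.
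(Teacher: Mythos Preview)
Your overall strategy---apply Theorem~\ref{WpBerryEsseen}, insert Varj\'u's nonuniform gap, optimize $M$---is exactly the paper's, and your bookkeeping (Weyl dimension formula, the sum $\sum d_\pi^2/\kappa_\pi\ll M^{\dim G}$, the choice $M=\exp(\varepsilon k^{1/3})$) is fine. But there is a genuine gap at the very first step.

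The assertion that ``$\nu^{*k}\to\mu_G$ weakly is equivalent to $\|\widehat\nu(\pi)\|_{\mathrm{op}}<1$ for every nontrivial $\pi$'' is false, and it is precisely what you need to get $c_0>0$ in Varj\'u's bound. Weak convergence gives $\widehat\nu(\pi)^k\to 0$, hence the \emph{spectral radius} of $\widehat\nu(\pi)$ is below $1$; for non-symmetric $\nu$ the operator norm can still equal $1$. A concrete counterexample on $\mathrm{SO}(3)$: take $\nu=\tfrac12(\delta_a+\delta_b)$ with $a\neq b$ generic enough that $\langle a,b\rangle$ is dense (so $\nu^{*k}\to\mu_G$ by Stromberg). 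In the standard $3$-dimensional representation $\pi$, let $w$ be the rotation axis of $a^{-1}b$ and set $v=\pi(a)w$. Then $\pi(a)^{-1}v=w$ and $\pi(b)^{-1}v=\pi(b^{-1}a)w=w$ (same axis), so $\widehat\nu(\pi)v=w$ with $|v|=|w|$, giving $\|\widehat\nu(\pi)\|_{\mathrm{op}}=1$. With this $\pi$ your displayed Varj\'u inequality would force $c_0\le 0$.

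The paper's proof repairs exactly this point: since the spectral radius is $<1$, there is an integer $m=m(G,\nu)$ with $\|\widehat\nu(\pi)^m\|_{\mathrm{op}}<1$ for all $\pi$ with $0<|\lambda_\pi|\le M_0$, and one applies Varj\'u's inequality \eqref{nonuniform} to $\vartheta=\nu^{*m}$ rather than to $\nu$. After that substitution (which costs only replacing $k$ by $\lfloor k/m\rfloor$ in the exponent), your argument goes through unchanged.
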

\noindent The condition of semisimplicity cannot be removed. The rate of convergence $W_p \ll \exp (-pck^{1/3})$, $0<p \le 1$ immediately follows. The main motivation came from our recent paper \cite{Borda} on quantitative ergodic theorems for random walks. Given independent, identically distributed $G$-valued random variables $\zeta_1, \zeta_2, \dots$ with distribution $\nu$, we showed that for any $f \in \mathcal{F}_p$ the sum $\sum_{k=1}^N f(\zeta_1 \zeta_2 \cdots \zeta_k)$ satisfies the central limit theorem and the law of the iterated logarithm, provided $\sum_{k=1}^{\infty} W_p (\nu^{*k},\mu_G) < \infty$. Corollary \ref{convergencerate} thus provides a large class of examples of random walks with fast enough convergence in $W_p$, and consequently to which our quantitative ergodic theorems apply. We do not know whether $W_p (\nu^{*k}, \mu_G) \ll e^{-ck^{1/3}}$ remains true for $p>1$.

Another possible application is in uniform distribution theory, where the goal is finding finite sets $\{ a_1, a_2, \dots, a_N \} \subset G$ which make the integration error $|N^{-1} \sum_{k=1}^N f(a_k)-\int_G f \, \mathrm{d}\mu_G|$ small for a suitable class of test functions. Applying Theorem \ref{WpBerryEsseen} to $\nu_1=N^{-1} \sum_{k=1}^N \delta_{a_k}$ (where $\delta_a$ is the Dirac measure concentrated at $a \in G$) and $\nu_2=\mu_G$, we can quantitatively measure how well distributed a finite set is with respect to test functions $f \in \mathcal{F}_p$. Note that in this case we have
\begin{equation}\label{charactersums}
\| \widehat{\nu_1} (\pi ) - \widehat{\nu_2} (\pi ) \|_{\mathrm{HS}}^2 = \frac{1}{N^2} \sum_{k, \ell =1}^N \chi_{\pi} (a_k^{-1}a_{\ell}) ,
\end{equation}
where $\chi_{\pi} (x)=\mathrm{tr} \, \pi (x)$ is the character of the representation $\pi \in \widehat{G}$. Theorem \ref{WpBerryEsseen} thus becomes an abstract version of the Erd\H{o}s--Tur\'an inequality, estimating the distance of a finite set from uniformity in terms of character sums. As an illustration, we will show that certain finite sets in $\mathrm{SO}(3)$ constructed by Lubotzky, Phillips and Sarnak using deep number theory are optimal with respect to $W_p$, $0<p \le 1$.

The only results in the $p>1$ case we are aware of are due to Brown and Steinerberger \cite{Brown} and Steinerberger \cite{Steinerberger1}, who estimated the distance in $W_2$ from $N^{-1}\sum_{k=1}^N \delta_{a_k}$ to the Haar measure in terms of character sums \eqref{charactersums}, and also in terms of the Green function of the Laplace--Beltrami operator; in fact, their results generalize to compact Riemannian manifolds with the Haar measure replaced by the Riemannian volume. A similar Erd\H{o}s--Tur\'an inequality on compact Riemannian manifolds with respect to sufficiently nice test sets was proved by Colzani, Gigante and Travaglini \cite{Colzani}. Numerical results for certain finite point sets on the orthogonal group $\mathrm{O}(d)$ and on Grassmannian manifolds were obtained by Pausinger \cite{Pausinger}. See also Steinerberger \cite{Steinerberger2} for the connection between $W_p$, $p \ge 1$ and Fourier analysis on the torus.

For probability distributions on $\mathbb{R}$, Esseen \cite{Esseen}, \cite[Corollary 8.3]{Bobkov} used a smoothing inequality for $W_1(\nu_1, \nu_2)$ to estimate the rate of convergence in $W_1$ in the central limit theorem; our Theorem \ref{WpBerryEsseen} and Corollary \ref{convergencerate} are far reaching analogues of these classical results in the compact setting. In the $p>1$ case the only known smoothing inequality on $\mathbb{R}$ applies to $W_2 (\nu_1, \nu_2)$ with $\nu_2$ a Gaussian distribution \cite[Theorem 11.1]{Bobkov}.

The discussion above can be generalized from $\mathcal{F}_p$ to the class of functions with an arbitrarily prescribed modulus of continuity, and we will actually work out the details in this generality. In particular, our results apply to any given $f \in C(G)$. The formal setup and notation are given in Section \ref{notation}; we state the general form of Theorem \ref{WpBerryEsseen} with explicit constants in Section \ref{BerryEsseensection}; applications to random walks and to uniform distribution theory are discussed in more detail, and the proof of Corollary \ref{convergencerate} is given in Section \ref{applications}. The proof of the main result, Theorem \ref{WgBerryEsseen} is given in Section \ref{proofs}.

\section{Results}\label{results}

\subsection{Notation}\label{notation}

Throughout the paper $G$ denotes a compact, connected Lie group with identity element $e \in G$ and Lie algebra $\mathfrak{g}$. Let $\exp: \mathfrak{g} \to G$ and $n=\mathrm{dim} \, G$ denote the exponential map and the dimension of $G$ as a real smooth manifold. Fix an Ad-invariant inner product $(\cdot, \cdot)$ on $\mathfrak{g}$, and let $|X|=\sqrt{(X,X)}$, $X \in \mathfrak{g}$. This inner product defines a Riemannian metric on $G$; let $\rho$ denote the corresponding geodesic metric on $G$. The Laplace--Beltrami operator on $G$ is $\Delta = \sum_{k=1}^n X_k X_k$ (as an element of the universal enveloping algebra of $\mathfrak{g}$), where $X_1, \dots, X_n$ is an orthonormal base in $\mathfrak{g}$ with respect to $(\cdot, \cdot)$; this does not depend on the choice of the orthonormal base.

Let $r$ denote the rank of $G$, and fix a maximal torus $T$ in $G$ with Lie algebra $\mathfrak{t}$. Let $\mathfrak{t}^*=\mathrm{Hom}(\mathfrak{t},\mathbb{R})$ denote the dual vector space. The sets
\[ \begin{split} \Gamma &= \left\{ X \in \mathfrak{t} \, : \, \exp (2 \pi X)=e  \right\}, \\ \Gamma^* &= \left\{ \lambda \in \mathfrak{t}^* \, : \, \lambda (X) \in \mathbb{Z} \textrm{ for all } X \in \Gamma \right\} \end{split} \]
are dual lattices of full rank in $\mathfrak{t}$ and $\mathfrak{t}^*$, respectively. The inner product on $\mathfrak{g}$ naturally defines an inner product on $\mathfrak{t}^*$, which we also denote by $(\cdot, \cdot)$; we also write $|\lambda|=\sqrt{(\lambda, \lambda)}$, $\lambda \in \mathfrak{t}^*$. The inner product defines a normalized Lebesgue measure $m$ on $\mathfrak{t}$.

The weights will be considered elements of $\Gamma^*$; the character of $T$ corresponding to $\lambda \in \Gamma^*$ is $\exp (2 \pi X) \mapsto e^{2 \pi i \lambda (X)}$, $X \in \mathfrak{t}$. Let $R$ be the set of roots, and choose a set of positive roots $R^+$; we have $|R|=n-r$ and $|R^+|=(n-r)/2$. Let $\Gamma_{\mathrm{root}}^* \subseteq \Gamma^*$ be the lattice spanned by the roots. Further, let
\[ C^+=\left\{ \lambda \in \mathfrak{t}^* \, : \, (\lambda, \alpha) \ge 0 \textrm{ for all } \alpha \in R^+  \right\} \]
be the dominant Weyl chamber; the set of dominant weights is thus $\Gamma^* \cap C^+$. The Weyl group of $G$ with respect to $T$ will be denoted by $W(G,T)=N_G(T)/T$.

Let $\widehat{G}$ be the unitary dual of $G$. For any $\pi \in \widehat{G}$, let $d_{\pi}$ and $\lambda_{\pi}$ denote the dimension and the highest weight of $\pi$. The map $\pi \mapsto \lambda_{\pi}$ is a bijection from $\widehat{G}$ to the set of dominant weights $\Gamma^* \cap C^+$. Let $\kappa_{\pi} \ge 0$ denote the negative Laplace eigenvalue of $\pi$; that is, $\Delta \pi = -\kappa_{\pi} \pi$ where $\Delta$ acts entrywise. Recall that
\[ \kappa_{\pi} = |\lambda_{\pi}|^2 + 2 (\lambda_{\pi}, \rho^+) \qquad \textrm{and} \qquad d_{\pi} = \frac{\prod_{\alpha \in R^+} (\lambda_{\pi}+ \rho^+ , \alpha )}{\prod_{\alpha \in R^+}(\rho^+, \alpha)}, \]
where $\rho^+ = \sum_{\alpha \in R^+} \alpha /2$ is the half-sum of positive roots; in particular,
\[ |\lambda_{\pi}|^2 \le \kappa_{\pi} \le |\lambda_{\pi}|^2 + O(|\lambda_{\pi}|) \qquad \textrm{and} \qquad d_{\pi} \ll |\lambda_{\pi}|^{(n-r)/2}. \]

Let $\mu_G$ (resp.\ $\mu_T$) denote the normalized Haar measure on $G$ (resp.\ $T$). The Fourier transform of a function $f:G \to \mathbb{C}$ is $\widehat{f}(\pi) = \int_G f(x) \pi(x)^* \, \mathrm{d}\mu_G(x)$, $\pi \in \widehat{G}$; that of a Borel probability measure $\nu$ on $G$ is $\widehat{\nu}(\pi)=\int_G \pi(x)^* \, \mathrm{d}\nu(x)$, $\pi \in \widehat{G}$. Here $\pi(x)^*$ denotes the adjoint of $\pi(x)$, and the integrals are taken entrywise.

Let $g:[0,\infty ) \to [0,\infty)$ be a nondecreasing and subadditive\footnote{That is, $g(t+u) \le g(t)+g(u)$ for all $t,u \ge 0$.} function such that $\lim_{t \to 0^+}g(t)=0$, and define
\[ W_g (\nu_1, \nu_2) = \inf_{\vartheta \in \mathrm{Coup}(\nu_1, \nu_2)} \int_{G \times G} g(\rho (x,y)) \, \mathrm{d} \vartheta (x,y), \]
where $\mathrm{Coup}(\nu_1, \nu_2)$ is the set of couplings, as before. Letting
\[ \mathcal{F}_g = \left\{ f:G \to \mathbb{R} \, : \, |f(x)-f(y)| \le g(\rho (x,y)) \textrm{ for all } x,y \in G \right\} , \]
the Kantorovich duality theorem states
\[ W_g (\nu_1, \nu_2) = \sup_{f \in \mathcal{F}_g} \left| \int_G f \, \mathrm{d} \nu_1 - \int_G f \, \mathrm{d} \nu_2 \right| . \]
Note that $W_g$ is a metric on the set of Borel probability measures on $G$ and it generates the topology of weak convergence, unless $g$ is constant zero. In the special case $g(t)=t^p$, $0<p \le 1$ we write $W_p$ (resp.\ $\mathcal{F}_p$) instead of $W_g$ (resp.\ $\mathcal{F}_g$). We mention that given $f \in C(G)$, the function
\[ g_f(t)=\sup \{ |f(x)-f(y)| \, : \, x,y \in G, \, \rho (x,y) \le t \} \]
is nondecreasing and subadditive, and $\lim_{t \to 0^+} g_f(t)=0$; in fact, $g_f$ is the smallest $g$ for which $f \in \mathcal{F}_g$.
\begin{remark} Kantorovich duality is usually stated for $g(t)=t$, i.e.\ for Lipschitz functions. To see the general case, note that $g(\rho (x,y))$ is another metric on $G$ generating the topology of $G$, unless $g$ is constant zero; the subadditivity of $g$ is needed for the triangle inequality. Kantorovich duality for Lipschitz functions in the $g(\rho (x,y))$ metric thus implies Kantorovich duality for $W_g$ as claimed. Further, since the usual $1$-Wasserstein metric with respect to $g(\rho (x,y))$ generates the topology of weak convergence, so does $W_g$.
\end{remark}

\subsection{Berry--Esseen inequality on compact Lie groups}\label{BerryEsseensection}

Let $\eta : \mathfrak{t} \to \mathbb{R}$ be a $W(G,T)$-invariant smooth function such that $\eta (X) = 0$ for all $|X| \ge 1$, and $0 \le \eta (X) \le \eta (0)=1$ for all $X \in \mathfrak{t}$. Since $W(G,T)$ acts by orthogonal transformations on $\mathfrak{t}$, $W(G,T)$-invariance can be ensured e.g.\ if $\eta (X)$ depends only on $|X|$. For instance, we can use the ``bump function''
\[ \eta (X) = \left\{ \begin{array}{ll} \exp \left( - \frac{|X|^2}{1-|X|^2} \right) & \textrm{if } |X| <1, \\ 0 & \textrm{if } |X| \ge 1 . \end{array} \right. \]
Let $F: \mathfrak{t} \to \mathbb{C}$, $F(X) = \int_{\mathfrak{t}} \eta (Y) e^{2 \pi i (X,Y)} \, \mathrm{d}m(Y)$; note that $F(X)$ is a Schwarz function, thus $|F(X)|$ decays at an arbitrary polynomial rate as $|X| \to \infty$. The main result of the paper is the following Berry--Esseen type inequality.
\begin{thm}\label{WgBerryEsseen} Let $\nu_1$ and $\nu_2$ be Borel probability measures on a compact, connected Lie group $G$, and let $g: [0,\infty ) \to [0,\infty )$ be nondecreasing and subadditive such that $\lim_{t \to 0^+}g(t)=0$. Let
\[ \psi (t)= \frac{2}{|W(G,T)|} \int_{\mathfrak{t}} g \left( \frac{2 \pi |X|}{\lfloor t/(|2\rho^+ |+a) \rfloor} \right) a^r |F(aX)| \bigg( \prod_{\alpha \in R^+} |e^{2 \pi i \alpha (X)}-1|^2 \bigg) \, \mathrm{d} m(X) \]
where $a=\min_{\lambda \in \Gamma_{\mathrm{root}}^*} |\lambda|/2=\min_{\alpha \in R} |\alpha|/2$ and $\rho^+ = \sum_{\alpha \in R^+} \alpha /2$, and let
\[ \phi (t) = \inf_{0<c<2(\sqrt{n^2+n}-n)} \sqrt{\frac{n}{1-c-c^2/(4n)}} \cdot \frac{g \left( \frac{c}{nt} \right)}{\frac{c}{nt}} . \]
Then for any real number $M \ge |2\rho^+|+a$,
\[ W_g (\nu_1, \nu_2) \le \psi (M) + \phi (M) \left( \sum_{\substack{\pi \in \widehat{G} \\ 0<|\lambda_{\pi}|<M}} \frac{d_{\pi}}{\kappa_{\pi}} \| \widehat{\nu_1}(\pi) - \widehat{\nu_2}(\pi) \|_{\mathrm{HS}}^2 \right)^{1/2} . \]
\end{thm}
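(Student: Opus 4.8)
The plan is to bound $\left|\int_G f\,\mathrm d\nu_1-\int_G f\,\mathrm d\nu_2\right|$ uniformly over $f\in\mathcal F_g$, which by the Kantorovich duality theorem suffices. Fix such an $f$ and let $V_M\ge 0$ be a central probability density on $G$ whose Fourier transform is supported on the representations with $|\lambda_\pi|<M$; its construction is described below. Splitting
\[ \int_G f\,\mathrm d\nu_1-\int_G f\,\mathrm d\nu_2=\Bigl(\int_G f\,\mathrm d(\nu_1*V_M\mu_G)-\int_G f\,\mathrm d(\nu_2*V_M\mu_G)\Bigr)+\sum_{j=1}^{2}(-1)^{j+1}\Bigl(\int_G f\,\mathrm d\nu_j-\int_G f\,\mathrm d(\nu_j*V_M\mu_G)\Bigr), \]
I would first dispose of the last term. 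Since $\rho$ is bi-invariant (it comes from an $\mathrm{Ad}$-invariant inner product), the coupling $(x,xy)$ of $\nu_j$ and $\nu_j*V_M\mu_G$ has cost $\int_G g(\rho(e,z))V_M(z)\,\mathrm d\mu_G(z)$, so by Kantorovich duality $\bigl|\int_G f\,\mathrm d\nu_j-\int_G f\,\mathrm d(\nu_j*V_M\mu_G)\bigr|\le W_g(\nu_j,\nu_j*V_M\mu_G)\le\int_G g(\rho(e,z))V_M(z)\,\mathrm d\mu_G(z)$; hence the last term contributes at most $2\int_G g(\rho(e,z))V_M(z)\,\mathrm d\mu_G(z)$.

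For the first term, write $q:=(\nu_1-\nu_2)*V_M\mu_G$, a continuous signed density with $\widehat q(\pi)=\widehat{V_M}(\pi)\bigl(\widehat{\nu_1}(\pi)-\widehat{\nu_2}(\pi)\bigr)$; since $V_M$ is central, Schur's lemma gives $\widehat{V_M}(\pi)=c_\pi I$ with $|c_\pi|\le\|\widehat{V_M}(\pi)\|_{\mathrm{op}}\le\int_G V_M\,\mathrm d\mu_G=1$, and $c_\pi=0$ for $|\lambda_\pi|\ge M$. By Plancherel's formula on $G$,
\[ \int_G f\,\mathrm d(q\mu_G)=\sum_{0<|\lambda_\pi|<M}d_\pi\,c_\pi\,\mathrm{tr}\bigl(\widehat f(\pi)(\widehat{\nu_1}(\pi)-\widehat{\nu_2}(\pi))^*\bigr), \]
the trivial representation dropping out because $\widehat{\nu_1}$ and $\widehat{\nu_2}$ agree there. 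Using $|\mathrm{tr}(AB^*)|\le\|A\|_{\mathrm{HS}}\|B\|_{\mathrm{HS}}$, then $|c_\pi|\le 1$, and then the Cauchy--Schwarz inequality with the weight $\kappa_\pi$ (positive for every nontrivial $\pi$), this term is at most
\[ \Bigl(\sum_{0<|\lambda_\pi|<M}\kappa_\pi d_\pi\|\widehat f(\pi)\|_{\mathrm{HS}}^2\Bigr)^{1/2}\Bigl(\sum_{0<|\lambda_\pi|<M}\frac{d_\pi}{\kappa_\pi}\|\widehat{\nu_1}(\pi)-\widehat{\nu_2}(\pi)\|_{\mathrm{HS}}^2\Bigr)^{1/2}. \]
The second factor already matches the statement, so it remains to bound the first factor by $\phi(M)$ and the smoothing error by $\psi(M)$.

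The first factor would be controlled by an a priori Fourier decay estimate valid for any $f\in\mathcal F_g$. For a unit vector $X\in\mathfrak g$ and $\epsilon>0$ one has $\widehat{f(\cdot\exp\epsilon X)}(\pi)=\pi(\exp\epsilon X)\widehat f(\pi)$, hence by Plancherel
\[ \sum_{\pi}d_\pi\bigl\|(\pi(\exp\epsilon X)-I)\widehat f(\pi)\bigr\|_{\mathrm{HS}}^2=\int_G|f(x\exp\epsilon X)-f(x)|^2\,\mathrm d\mu_G(x)\le g(\epsilon)^2, \]
using $f\in\mathcal F_g$, $\rho(e,\exp\epsilon X)\le\epsilon$ and that $g$ is nondecreasing. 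Summing over an orthonormal basis $X_1,\dots,X_n$ of $\mathfrak g$ and writing $\mathrm d\pi(X_k)=iH_k$ with $H_k$ Hermitian, one has $\sum_k\bigl(2I-\pi(\exp\epsilon X_k)-\pi(\exp(-\epsilon X_k))\bigr)=\sum_k 2\bigl(I-\cos(\epsilon H_k)\bigr)$, while $\sum_k H_k^2=\kappa_\pi I$ (because $\Delta=\sum_k X_k^2$ acts as $-\kappa_\pi$) and therefore $\|H_k\|^2\le\kappa_\pi$. The elementary bound $1-\cos u\ge u^2/2-u^4/24$ gives the operator inequality $\sum_k 2\bigl(I-\cos(\epsilon H_k)\bigr)\succeq\epsilon^2\kappa_\pi\bigl(1-\tfrac{\epsilon^2\kappa_\pi}{12}\bigr)I$, so that
\[ \sum_{0<|\lambda_\pi|<M}\epsilon^2\kappa_\pi\Bigl(1-\frac{\epsilon^2\kappa_\pi}{12}\Bigr)d_\pi\|\widehat f(\pi)\|_{\mathrm{HS}}^2\le n\,g(\epsilon)^2. \]
For $|\lambda_\pi|<M$ with $M\ge|2\rho^+|+a$, the quantity $\kappa_\pi=|\lambda_\pi|^2+2(\lambda_\pi,\rho^+)$ is bounded by a constant multiple of $M^2$; taking $\epsilon=c/(nM)$ makes $1-\epsilon^2\kappa_\pi/12$ bounded below, and optimizing over $c$ — with subadditivity of $g$ used to control $g(\epsilon)$ — yields exactly $\phi(M)$.

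It remains to construct $V_M$ and to estimate the smoothing error. On the maximal torus, the Weyl character formula identifies the product of the restriction of a central function on $G$ with the Weyl denominator $\delta$ with a $W(G,T)$-anti-invariant exponential sum on $\mathfrak t/\Gamma$, and a central function is band-limited below level $M$ exactly when the frequencies of this exponential sum are suitably bounded. Taking $\eta$ rescaled to radius $a=\min_{\alpha\in R}|\alpha|/2$ as a building block, forming a nonnegative $W(G,T)$-invariant function by a squaring construction, and raising it to the power $m=\lfloor M/(|2\rho^+|+a)\rfloor$ produces such a function; the hypothesis $M\ge|2\rho^+|+a$ is precisely what guarantees that, after one accounts for the shift by $\rho^+$ and the degree of $\delta$, the $m$-th power still lies below level $M$. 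Transferring it to $G$ yields a central probability density $V_M$, band-limited below $M$, concentrated near $e$ at scale $1/m$ because the Fourier transform $F$ of $\eta$ is a Schwartz function. Finally, since $g(\rho(e,\cdot))$ and $V_M$ are both class functions, the Weyl integration formula turns $\int_G g(\rho(e,z))V_M(z)\,\mathrm d\mu_G(z)$ into an integral over a fundamental domain of $\Gamma$ in $\mathfrak t$ against the density $\prod_{\alpha\in R^+}|e^{2\pi i\alpha(X)}-1|^2$; estimating $\rho(e,\exp 2\pi X)\le 2\pi|X|$, unfolding the periodization, and extending the integration to all of $\mathfrak t$ by nonnegativity gives $2\int_G g(\rho(e,z))V_M(z)\,\mathrm d\mu_G(z)\le\psi(M)$. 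Combining the estimates above and taking the supremum over $f\in\mathcal F_g$ proves the theorem. The main obstacle is this final construction: building one kernel that is at once a nonnegative probability density, band-limited below level $M$, and concentrated at the identity — rather than merely on the finite centre of $G$ — requires a careful use of highest weight theory, and it is there that the constants $a$, $|2\rho^+|$ and the floor function in $\psi$ arise.
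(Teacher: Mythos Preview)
Your high-level strategy --- Kantorovich duality, smoothing by a band-limited central kernel, Parseval and Cauchy--Schwarz with weight $\kappa_\pi$, and an a priori Fourier decay estimate for $\mathcal F_g$ --- is exactly the paper's. The discrepancies are in the two places where you claim to recover the explicit functions $\psi(M)$ and $\phi(M)$.

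\textbf{The kernel and $\psi(M)$.} The paper does \emph{not} use a nonnegative probability density. It uses the Cartwright--Kucharski kernel $K_M$: on the maximal torus, $K_M(t)=P(t)\,\delta_G(t^{M_0})/\delta_G(t)$, where $M_0=\lfloor M/(|2\rho^+|+a)\rfloor$ and $P$ is the periodization of $(aM_0)^r F(aM_0\,\cdot)$. This kernel can take negative values; what the paper establishes is only $\int_G K_M\,\mathrm d\mu_G=1$ and the coefficient bound $|a_\pi|\le d_\pi$ (equivalently $|c_\pi|\le 1$ in your notation), obtained by direct computation rather than from positivity. The smoothing error is handled not via a coupling but via
\[
2\|f-f*K_M\|_\infty\;\le\;2\int_G g(\rho(e,y))\,|K_M(y)|\,\mathrm d\mu_G(y),
\]
and after the Weyl integration formula and the change of variables $X\mapsto X/M_0$ this \emph{equals} $\psi(M)$; the presence of $|F(aX)|$ (first power, absolute value) in $\psi$ is a direct trace of this specific kernel. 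Your coupling bound $W_g(\nu_j,\nu_j*V_M\mu_G)\le\int g(\rho(e,z))V_M(z)\,\mathrm d\mu_G$ genuinely needs $V_M\ge 0$, and a ``squaring and $m$-th power'' construction would produce an integrand involving $|F|^2$ or an $m$-fold self-convolution, not $|F|$; so your sketch cannot recover $\psi(M)$ as written. The paper explicitly remarks that nonnegative (Fej\'er-type) kernels lose a logarithm at $p=1$, which is precisely why the Cartwright--Kucharski kernel is used.

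\textbf{Fourier decay and $\phi(M)$.} Your $1-\cos$ argument is a clean alternative to the paper's, which instead uses the first-order Taylor remainder $\|\pi(\exp uX)-I-u\,\mathrm d\pi(X)\|_{\mathrm{op}}\le\frac{u^2}{2}\|\mathrm d\pi(X)\|_{\mathrm{op}}^2$ together with Sugiura's bound $\|\mathrm d\pi(X)\|_{\mathrm{op}}\le|\lambda_\pi|\,|X|$. That route gives the error factor $1-n|u|\,|\lambda_\pi|-n\tfrac{u^2}{4}|\lambda_\pi|^2$, which after $u=c/(nM)$ becomes $1-c-c^2/(4n)$ and produces $\phi(M)$ exactly. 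Your factor is $1-\epsilon^2\kappa_\pi/12$, and for $|\lambda_\pi|<M$ with $M\ge|2\rho^+|+a$ one has $\kappa_\pi<2M^2$, so with $\epsilon=c/(nM)$ you get $1-c^2/(6n^2)$. This is \emph{larger} than $1-c-c^2/(4n)$, so your bound on the first factor is actually smaller than $\phi(M)$ and the stated inequality follows a fortiori; but the claim that your optimization ``yields exactly $\phi(M)$'' is not correct. Also, subadditivity of $g$ plays no role at this step in either argument --- only monotonicity is used.
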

\noindent Observe that $\psi (M) \ll g(1/M)$ and $\phi (M) \ll Mg(1/M)$ with implied constants depending only on $G$. Theorem \ref{WpBerryEsseen} thus follows from Theorem \ref{WgBerryEsseen} with $g(t)=t^p$, $0<p\le 1$.

\subsection{Applications}\label{applications}

\subsubsection{Spectral gaps and random walks}

Given Borel probability measures $\nu_1$ and $\nu_2$ on $G$, let $\nu_1 * \nu_2$ denote their convolution, and let $\nu_1^*(B)=\nu_1(B^{-1})$, $B \subseteq G$ Borel. If $\zeta_1$ and $\zeta_2$ are independent $G$-valued random variables with distribution $\nu_1$ and $\nu_2$, then $\nu_1 * \nu_2$ (resp.\ $\nu_1^*$) is the distribution of $\zeta_1 \zeta_2$ (resp.\ $\zeta_1^{-1}$).

Let $L_0^2(G,\mu_G)$ be the orthogonal complement of the space of constant functions in $L^2(G,\mu_G)$; that is, the set of all $f \in L^2(G,\mu_G)$ with $\int_G f \, \mathrm{d}\mu_G =0$. Given a Borel probability measure $\nu$ on $G$, let $T_{\nu}: L_0^2(G,\mu_G) \to L_0^2(G,\mu_G)$,
\[ (T_{\nu}f)(x)= \int_G f(xy) \, \mathrm{d}\nu (y) \]
be its associated Markov operator. Observe that $T_{\nu_1*\nu_2}=T_{\nu_1} T_{\nu_2}$ and $T_{\nu^*}=T_{\nu}^*$; in particular, $T_{\nu}$ is self-adjoint (resp.\ normal) if and only if $\nu=\nu^*$ (resp.\ $\nu * \nu^* = \nu^* * \nu$).

We start with a trivial estimate for $W_p (\nu, \mu_G)$ in terms of $T_{\nu}$. It is not difficult to see that
\[ q_{\nu} := \| T_{\nu} \|_{\mathrm{op}} = \sup_{\substack{\pi \in \widehat{G} \\ \pi \neq \pi_0}} \| \widehat{\nu} (\pi ) \|_{\mathrm{op}}, \]
where $\pi_0 \in \widehat{G}$ denotes the trivial representation, and $\| \cdot \|_{\mathrm{op}}$ is the operator norm. Let $f \in \mathcal{F}_p$ with $\int_G f \, \mathrm{d}\mu_G=0$ be arbitrary, and note that $T_{\nu} f \in \mathcal{F}_p$. Since $|T_{\nu} f| \ge |(T_{\nu} f)(e)|/2$ on the ball centered at $e$ with radius $r=(|(T_{\nu} f)(e)|/2)^{1/p}$, we have
\[ \begin{split} \| T_{\nu} f \|_2^2 &\ge \left( \frac{(T_{\nu}f)(e)}{2} \right)^2 \mu_G \left( B(e,r) \right) \gg |(T_{\nu}f)(e)|^{2+n/p}, \\ \| T_{\nu}f \|_2^2 &\le \| T_{\nu} \|_{\mathrm{op}}^2 \cdot \| f \|_2^2 \ll \| T_{\nu} \|_{\mathrm{op}}^2 . \end{split} \]
Therefore $|\int_G f \, \mathrm{d} \nu| = |(T_{\nu}f)(e)| \ll q_{\nu}^{2p/(n+2p)}$, and consequently
\begin{equation}\label{trivialqnu}
W_p (\nu, \mu_G ) \ll q_{\nu}^{2p/(n+2p)} .
\end{equation}

We now deduce a sharp improvement on the trivial estimate \eqref{trivialqnu}. Recall that $\| A \|_{\mathrm{HS}} \le \sqrt{d_{\pi}} \| A \|_{\mathrm{op}}$ for any $d_{\pi} \times d_{\pi}$ matrix $A$. With $\nu_1=\nu$ and $\nu_2=\mu_G$,
\begin{equation}\label{dimensionestimate}
\begin{split} \sum_{\substack{\pi \in \widehat{G}\\ 0<|\lambda_{\pi}|<M}} \frac{d_{\pi}}{\kappa_{\pi}} \| \widehat{\nu_1}(\pi) - \widehat{\nu_2}(\pi ) \|_{\mathrm{HS}}^2 &\le \sum_{\substack{\pi \in \widehat{G} \\ 0<|\lambda_{\pi}|<M}} \frac{d_{\pi}^2}{\kappa_{\pi}} \| \widehat{\nu}(\pi ) \|_{\mathrm{op}}^2 \\ &\ll \sum_{\substack{\pi \in \widehat{G}\\ 0<|\lambda_{\pi}|<M}} |\lambda_{\pi}|^{n-r-2} q_{\nu}^2 \\ &\ll \left\{ \begin{array}{ll} q_{\nu}^2 & \textrm{if } n=1, \\ (\log (M+2)) q_{\nu}^2 & \textrm{if } n=2, \\ M^{n-2} q_{\nu}^2 & \textrm{if } n \ge 3 . \end{array} \right. \end{split}
\end{equation}
Optimizing the value of the free parameter $M>0$, in dimension $n \ge 3$ Theorem \ref{WpBerryEsseen} thus gives that for any $0<p \le 1$,
\begin{equation}\label{improvedqnu}
W_p (\nu, \mu_G ) \ll q_{\nu}^{2p/n}
\end{equation}
with an implied constant depending only on $G$. Using Theorem \ref{WgBerryEsseen} instead, we get $W_g(\nu, \mu_G) \ll g(q_{\nu}^{2/n})$. Similar estimates can be deduced in dimensions $n=1$ and $2$. Clearly $q_{\nu} \le 1$, and $q_{\nu_1*\nu_2} \le q_{\nu_1} q_{\nu_2}$; in particular, \eqref{improvedqnu} gives the upper bound for the rate of convergence of random walks $W_p (\nu_1 * \cdots * \nu_N, \mu_G) \ll \prod_{k=1}^N q_{\nu_k}^{2p/n}$. 

We say that $\nu$ has a \textit{spectral gap}, if the spectral radius of $T_{\nu}$ is strictly less than $1$; note that this is a direct generalization of Cram\'er's condition in classical probability theory. Assuming $T_{\nu}$ is normal, having a spectral gap is equivalent to $q_{\nu}<1$; for general $T_{\nu}$, it is equivalent to $q_{\nu^{*m}}<1$ for some integer $m \ge 1$. Deciding whether a given $\nu$ has a spectral gap is a highly nontrivial problem. Generalizing results of Bourgain and Gamburd \cite{Bourgain1}, \cite{Bourgain2} on $\mathrm{SU}(2)$ and $\mathrm{SU}(d)$, Benoist and Saxc\'e considered a Borel probability measure $\nu$ on a compact, connected, simple Lie group $G$. They proved \cite[Theorem 3.1]{Benoist} that if the support of $\nu$ is not contained in any proper closed subgroup, and each element of the support (as a matrix) has algebraic entries, then $\nu$ has a spectral gap. The same authors also conjectured that the condition that the matrix entries are algebraic can be dropped.

Using \eqref{improvedqnu} (or even just \eqref{trivialqnu}), $W_p (\nu^{*k},\mu_G) \to 0$ exponentially fast as $k\to \infty$ whenever $\nu$ has a spectral gap. Corollary \ref{convergencerate} is thus basically an unconditional (i.e.\ not assuming the conjecture of Benoist and Saxc\'e), weaker form of this fact. In contrast to the (semi)simple case, $W_p(\nu^{*k},\mu_G) \to 0$ polynomially fast for certain finitely supported measures $\nu$ on the torus $\mathbb{R}^d / \mathbb{Z}^d$ \cite{Borda}.

So far we have only discussed the relationship between $W_p(\nu, \mu_G)$ and the spectral gap of $\nu$. Theorem \ref{WpBerryEsseen}, however, provides a quantitative relationship between $W_p (\nu, \mu_G)$ and the spectrum of the self-adjoint operator $T_{\nu}^* T_{\nu}$ itself. Indeed, by the Peter--Weyl theorem $L_0^2(G,\mu_G)=\oplus_{\pi \in \widehat{G}, \pi \neq \pi_0} V_{\pi}$, where $V_{\pi}$ is the vector space spanned by the entries of $\pi(x)$. Since $(T_{\nu}\pi)(x)=\pi (x) \widehat{\nu}(\pi)^*$, the action of $T_{\nu}$ on $V_{\pi}$ is determined by $\widehat{\nu}(\pi )$; in particular, $d_{\pi} \| \widehat{\nu}(\pi) \|_{\mathrm{HS}}^2$ is simply the sum of all spectrum points of $T_{\nu}^* T_{\nu}$ on $V_{\pi}$. The proof of Corollary \ref{convergencerate} is based on this quantitative relationship.
\begin{proof}[Proof of Corollary \ref{convergencerate}] Varj\'u \cite[Theorem 6]{Varju} proved that for any Borel probability measure $\vartheta$ on $G$ and any $M>0$,
\begin{equation}\label{nonuniform}
1- \max_{\substack{\pi \in \widehat{G} \\ 0<|\lambda_{\pi}| \le M}} \| \widehat{\vartheta}(\pi) \|_{\mathrm{op}} \ge c_0 \left( 1- \max_{\substack{\pi \in \widehat{G} \\ 0<|\lambda_{\pi}| \le M_0}} \| \widehat{\vartheta}(\pi) \|_{\mathrm{op}} \right) \frac{1}{\log^A (M+2)} ,
\end{equation}
where the constants $c_0, M_0>0$ and $1 \le A \le 2$ depend only on the group $G$; in fact, the exact value of $A$ was also given. Since $\nu^{*k} \to \mu_G$ weakly, we have $\widehat{\nu}(\pi)^k = \widehat{\nu^{*k}} (\pi) \to 0$ for all $\pi \neq \pi_0$, and hence the spectral radius of $\widehat{\nu}(\pi)$ is less than $1$. It follows that for any $\pi \in \widehat{G}$ with $0<|\lambda_{\pi}| \le M_0$, we have $\| \widehat{\nu} (\pi)^m \|_{\mathrm{op}}<1$ with some positive integer $m=m(G,\nu)$; in particular,
\[ b=b(G,\nu ) := c_0 \left( 1- \max_{\substack{\pi \in \widehat{G} \\ 0<|\lambda_{\pi}| \le M_0}} \| \widehat{\nu}(\pi)^m \|_{\mathrm{op}} \right) >0. \]
Applying \eqref{nonuniform} to $\vartheta=\nu^{*m}$, we get that for any positive integer $k$ and any $M>0$,
\[ \begin{split} \max_{\substack{\pi \in \widehat{G} \\ 0<|\lambda_{\pi}| \le M}} \| \widehat{\nu^{*k}} (\pi) \|_{\mathrm{op}} \le \left( \max_{\substack{\pi \in \widehat{G} \\ 0<|\lambda_{\pi}| \le M}} \| \widehat{\nu} (\pi)^m \|_{\mathrm{op}} \right)^{\lfloor k/m \rfloor} &\le \left( 1-\frac{b}{\log^A (M+2)} \right)^{(k-m)/m} \\ &\le e^{-b(k-m)/(m\log^A (M+2))} . \end{split} \]
Hence
\[ \begin{split} \sum_{\substack{\pi \in \widehat{G}\\ 0<|\lambda_{\pi}|<M}} \frac{d_{\pi}}{\kappa_{\pi}} \| \widehat{\nu^{*k}} (\pi ) \|_{\mathrm{HS}}^2 &\le \sum_{\substack{\pi \in \widehat{G}\\ 0<|\lambda_{\pi}|<M}} \frac{d_{\pi}^2}{\kappa_{\pi}} \| \widehat{\nu^{*k}} (\pi ) \|_{\mathrm{op}}^2 \\ &\ll \sum_{\substack{\pi \in \widehat{G}\\ 0<|\lambda_{\pi}|<M}} |\lambda_{\pi}|^{n-r-2} e^{-b(k-m)/(m\log^A (M+2))} \\ &\ll M^n e^{-b(k-m)/(m\log^A (M+2))}. \end{split} \]
The first factor is actually $1$, $\log (M+2)$, $M^{n-2}$ in the cases $n=1$, $n=2$, $n \ge 3$, but this will not play an important role. Theorem \ref{WpBerryEsseen} thus gives that for any $M>0$,
\[ W_1 (\nu^{*k}, \mu_G ) \ll \frac{1}{M} + M^{n/2} e^{-b(k-m)/(2m \log^A (M+2))} . \]
Choosing $\log^{A+1} M= b(k-m)/(2mn)$, we deduce
\[ W_1 (\nu^{*k}, \mu_G ) \ll k^{\frac{1}{A+1}} \exp \left( - \frac{n}{2} \left( \frac{b(k-m)}{2mn} \right)^{\frac{1}{A+1}} \right) . \]
In particular, $W_1 (\nu^{*k}, \mu_G ) \ll \exp \left( -ck^{\frac{1}{A+1}} \right)$ with any $0<c<\frac{n}{2} \cdot (\frac{b}{2mn})^{\frac{1}{A+1}}$.
\end{proof}

\begin{remark} Using Theorem \ref{WgBerryEsseen} instead of Theorem \ref{WpBerryEsseen}, we deduce the general form of the conclusion of Corollary \ref{convergencerate} as $W_g (\nu^{*k}, \mu_G) \ll g(e^{-ck^{1/3}})$.
\end{remark}

\subsubsection{Uniform distribution theory}

Next, we consider applications in uniform distribution theory. It is not difficult to see e.g.\ directly from the definition of $W_g$, that for any given nonempty finite set $A \subset G$ and any $g$ as in Section \ref{notation},
\begin{equation}\label{optimalquantization1}
\inf_{\mathrm{supp} \, \nu \subseteq A} W_g (\nu, \mu_G) = \int_G g(\mathrm{dist} \, (A,x)) \, \mathrm{d} \mu_G (x) ,
\end{equation}
where the infimum is over all probability measures $\nu$ whose support is contained in $A$, and $\mathrm{dist} \, (A, \cdot)$ denotes distance from the set $A$. Indeed, the infimum is attained when for any $a \in A$, $\nu (\{ a \})$ is the Haar measure of the Voronoi cell
\[ \{ x \in G \, : \, \mathrm{dist} \, (A,x) = \rho (a,x) \} . \]
In this case the optimal transport plan from $\nu$ to $\mu_G$ is to simply spread $\nu (\{ a \})$ evenly over the given Voronoi cell. Recall that open balls $B(x,r)$ in $G$ of radius $0<r<\mathrm{diam}\, G$ satisfy $r^n \ll \mu_G (B(x,r)) \ll r^n$. A standard ball packing argument (using e.g.\ the ``$3r$ covering lemma'' of Vitali) shows that the optimal distance from a probability measure supported on at most $N$ points to the Haar measure is
\begin{equation}\label{optimalquantization2}
g(N^{-1/n}) \ll \inf_{|\mathrm{supp} \, \nu | \le N} W_g (\nu, \mu_G ) \ll g(N^{-1/n})
\end{equation}
with implied constants depending only on $G$. In particular, \eqref{optimalquantization1} and \eqref{optimalquantization2} hold for $W_p$, $0<p\le 1$. We mention that $W_p$, $1 \le p < \infty$ also satisfies the same estimates as $W_1$. For a detailed proof in the case $1\le p<\infty$ see Kloeckner \cite{Kloeckner}; the proof for $0<p<1$ and for more general $g$ is identical. We refer to the same paper for far reaching generalizations (e.g.\ to more general measures on Riemannian manifolds).

Lubotzky, Phillips and Sarnak \cite{Sarnak1}, \cite{Sarnak2} considered the problem of finding well distributed finite sets in $\mathrm{SO}(3)$, and consequently, on the sphere $S^2$. For any $N$ such that $2N-1$ is a prime congruent to $1$ modulo $4$, they constructed a symmetric set $\{ a_1, a_2, \dots, a_{2N} \} \subset \mathrm{SO}(3)$ for which the probability measure $\nu_N= (2N)^{-1} \sum_{k=1}^{2N} \delta_{a_k}$ satisfies $q_{\nu_N} = \sqrt{2N-1}/N$; this spectral gap is in fact optimal among all symmetric sets of size $2N$. Since $\mathrm{SO}(3)$ has dimension $n=3$, \eqref{improvedqnu} yields that for any $0<p \le 1$,
\[ W_p (\nu_N, \mu_{\mathrm{SO}(3)}) \ll N^{-p/3} \]
with a universal implied constant; by \eqref{optimalquantization2}, this is optimal. Note that the trivial estimate \eqref{trivialqnu} only yields $W_p (\nu_N, \mu_{\mathrm{SO}(3)}) \ll N^{-p/(3+2p)}$. More generally, we have $W_g (\nu_N, \mu_{\mathrm{SO}(3)}) \ll g(N^{-1/3})$.

Clozel \cite{Clozel} proved a similar optimal (up to a constant factor) spectral gap estimate in terms of the size of a finite set in $\mathrm{U}(d)$. Less precise estimates on more general compact homogeneous spaces were obtained by Oh \cite{Oh}.

\subsubsection{Empirical measures}\label{empirical}

Finally, we address the sharpness of Theorems \ref{WpBerryEsseen} and \ref{WgBerryEsseen}; we do so by deducing a simple estimate on the mean rate of convergence of empirical measures. Let $\nu$ be an arbitrary Borel probability measure on $G$, and let $\zeta_1, \zeta_2, \dots, \zeta_N$ be independent, identically distributed $G$-valued random variables with distribution $\nu$. The probability measure $\overline{\nu}_N:=N^{-1} \sum_{k=1}^N \delta_{\zeta_k}$ is called the corresponding empirical measure. Theorem \ref{WpBerryEsseen} gives an estimate for $W_p (\overline{\nu}_N, \nu)$ --- a random variable! --- as follows. Let $E_{\pi} = \mathbb{E} \pi (\zeta_1 ) = \widehat{\nu}(\pi )^*$. With $\nu_1 = \overline{\nu}_N$ and $\nu_2 = \nu$ we then have
\[ \widehat{\nu_1} (\pi ) - \widehat{\nu_2} (\pi ) = \frac{1}{N} \sum_{k=1}^N \left( \pi (\zeta_k ) -E_{\pi} \right)^* , \]
and by independence, the ``variance'' satisfies
\[ \mathbb{E} \, \| \widehat{\nu_1} (\pi ) - \widehat{\nu_2}(\pi ) \|_{\mathrm{HS}}^2 = \frac{1}{N^2} \sum_{k=1}^N \E \, \mathrm{tr} \left( \pi (\zeta_k)^* \pi (\zeta_k) - E_{\pi}^* E_{\pi} \right) \le \frac{d_{\pi}}{N} . \]
In the last step we used that $\pi (x)$ is unitary. Following the steps in \eqref{dimensionestimate}, in dimension $n \ge 3$ Theorem \ref{WpBerryEsseen} gives that for any $0<p \le 1$ and any $M>0$,
\[ \mathbb{E} W_p (\overline{\nu}_N, \nu ) \le \sqrt{\E W_p (\overline{\nu}_N, \nu )^2} \ll \frac{1}{M^p} + M^{1-p} \sqrt{\frac{M^{n-2}}{N}} . \]
Optimizing the value of $M>0$, we finally obtain that in dimension $n \ge 3$, for any $0<p \le 1$,
\begin{equation}\label{empiricalrate}
\mathbb{E} W_p (\overline{\nu}_N, \nu ) \ll N^{-p/n}
\end{equation}
with an implied constant depending only on $G$; more generally, $\mathbb{E} W_g (\overline{\nu}_N, \nu) \ll g(N^{-1/n})$. These are sharp by \eqref{optimalquantization2}; in particular, Theorems \ref{WpBerryEsseen} and \ref{WgBerryEsseen} are also sharp up to a constant factor depending on $G$. Note that the only compact, connected Lie groups in dimension $n=1$ and $n=2$ are $\mathbb{R}/\mathbb{Z}$ and $\mathbb{R}^2/\mathbb{Z}^2$, and the sharpness of Theorem \ref{WpBerryEsseen} on these groups follows from results in \cite{Borda}.

The rate of convergence of empirical measures in $W_p$, $p \ge 1$ on more general metric spaces was studied by Bach and Weed \cite{Bach}, and by Boissard and Le Gouic \cite{Boissard}. Instead of Fourier methods, they used a sequence of partitions of the metric space, each refining its predecessor to construct transport plans. It follows e.g.\ from \cite[Corollary 1.2]{Boissard} that our estimate \eqref{empiricalrate} can be improved to $\E W_p (\overline{\nu}_N, \nu) \ll N^{-1/n}$ for all $p \ge 1$. We refer to \cite{Bach} for improvements for measures $\nu$ supported on sets of lower dimension than the ambient space.

\section{Proof of Theorem \ref{WgBerryEsseen}}\label{proofs}

The proof of Berry--Esseen type inequalities are usually based on smoothing with an approximate identity whose Fourier transform has bounded support. For instance, in the proof of Theorem \ref{classicalBerryEsseen} this Fourier transform is the ``rooftop function'' $\max \{ 1-|t|/T, 0 \}$, supported on $[-T,T]$. The proof of Theorem \ref{NiederreiterBerryEsseen} uses the discrete version $\prod_{k=1}^d \max\{1-|m_k|/(M+1), 0 \}$, supported on $[-M,M]^d$; in the setting of the torus this is known as the Fej\'er kernel.

Our proof of Theorem \ref{WgBerryEsseen} follows the same idea. We will choose a kernel $K_M: G \to \mathbb{C}$ whose Fourier transform satisfies $\widehat{K_M}(\pi)=0$ whenever $|\lambda_{\pi}| \ge M$. Clearly,
\begin{equation}\label{basicestimate}
\left| \int_G f \, \mathrm{d}\nu_1 - \int_G f \, \mathrm{d}\nu_2 \right| \le 2 \| f-f*K_M \|_{\infty} + \left| \int_G f*K_M \, \mathrm{d}\nu_1 - \int_G f*K_M \, \mathrm{d}\nu_2 \right| ,
\end{equation}
where $f*K_M$ denotes convolution. Our goal is to find an upper estimate of the right hand side which is uniform in $f \in \mathcal{F}_g$; by Kantorovich duality, the same upper estimate will hold for $W_g (\nu_1, \nu_2)$. A possible choice for $K_M$ could be a Fej\'er-like kernel
\[ \frac{1}{|B_M|} \bigg| \sum_{\substack{\pi \in \widehat{G} \\ \lambda_{\pi} \in B_M}} \chi_{\pi} \bigg|^2 \]
with some set $B_M \subseteq \{ \lambda \in \Gamma^* \cap C^+ \, : \, |\lambda|<M/2 \}$. For convergence properties of such Fej\'er kernels on compact Lie groups we refer to \cite{Brandolini} and \cite{Travaglini}. We mention that using these kernels it is possible to deduce upper bounds to $W_p$, $0<p<1$ sharp up to a constant factor depending on $p$, but in the case $p=1$ we necessarily lose a logarithmic factor in $M$; the reason is that the Fej\'er kernel does not approximate Lipschitz functions optimally in the supremum norm. Fixing this shortcoming is easy on the torus; we simply need to use the normalized square of the Fej\'er kernel instead. By Jackson's theorem we then have the optimal rate of approximation $\| f-f*K_M \|_{\infty} \ll g(1/M)$, and a sharp Berry--Esseen smoothing inequality on the torus follows \cite{Borda}. Similar modifications of the Fej\'er kernel are known to yield Jackson type theorems on certain classical groups, see Gong \cite{Gong}; however, this approach seems not to have been worked out in full generality. An elegant proof of Jackson's theorem on an arbitrary compact, connected Lie group was nevertheless found by Cartwright and Kucharski \cite{Cartwright}, and in this paper we will use their kernel.

For the sake of completeness, we include the construction of the kernel in Section \ref{kernelsection}; we carry out the smoothing procedure in Section \ref{smoothingsection}; finally, prove a decay estimate for the Fourier transform of $f$ and finish the proof of Theorem \ref{WgBerryEsseen} in Section \ref{fourierdecay}.

\subsection{Construction of the kernel}\label{kernelsection}

Recall that the Weyl integral formula \cite[p.\ 338]{Bourbaki3} states that for any central function $\varphi \in L^1(G,\mu_G)$ we have
\[ \int_G \varphi \, \mathrm{d}\mu_G = \frac{1}{|W(G,T)|}\int_T \varphi \cdot \delta_G \, \mathrm{d}\mu_T , \]
where the function $\delta_G : T \to \mathbb{R}$ is defined as
\[ \delta_G (\exp (2 \pi X))=\prod_{\alpha \in R} (e^{2 \pi i \alpha (X)}-1) = \prod_{\alpha \in R^+} \left| e^{2 \pi i \alpha (X)} -1 \right|^2 , \qquad X \in \mathfrak{t}. \]
In particular, $\delta_G \ge 0$ and $\int_T \delta_G \, \mathrm{d}\mu_T = |W(G,T)|$. Expanding the product in its definition, $\delta_G$ is thus a $W(G,T)$-invariant trigonometric polynomial on $T$ of the form
\begin{equation}\label{deltagtrigpolynomial}
\delta_G (\exp (2 \pi X)) = \sum_{\lambda \in \Gamma_{\mathrm{root}}^*} c_{\lambda} e^{2 \pi i \lambda (X)} , \qquad X \in \mathfrak{t}.
\end{equation}
The constant term is $c_0=|W(G,T)|$, and all coefficients satisfy $|c_{\lambda}| \le |W(G,T)|$. Observe also that for any $\varphi \in L^1 (T, \mu_T)$,
\begin{equation}\label{exponentialtransformation}
\int_T \varphi (t) \, \mathrm{d}\mu_T (t) = \int_{\mathfrak{t}/\Gamma} \varphi (\exp (2 \pi X)) \, \mathrm{d}\mu_{\mathfrak{t}/\Gamma}(X),
\end{equation}
where $\mu_{\mathfrak{t}/\Gamma}$ is the normalized Haar measure on $\mathfrak{t}/\Gamma$. Note that $\mu_{\mathfrak{t}/\Gamma} = m/\mathrm{Vol} (\mathfrak{t}/\Gamma)$, where $\mathrm{Vol} (\mathfrak{t}/\Gamma)$ is the Lebesgue measure of the fundamental domain of $\Gamma$.

Following \cite{Cartwright} with minor modifications, we now construct a kernel $K_M$. Let $\eta (X)$ and $F (X)$ be as in Section \ref{BerryEsseensection}, and recall the notation $a=\min_{\lambda \in \Gamma_{\mathrm{root}}^*} |\lambda|/2$. Let $M \ge |2 \rho^+| +a$ be arbitrary, and set $M_0=\lfloor M/(|2\rho^+|+a) \rfloor$. Define $P: T \to \mathbb{C}$ as
\[ P (\exp (2 \pi X)) = \mathrm{Vol} (\mathfrak{t}/\Gamma ) (aM_0)^r \sum_{Y \in \Gamma} F(aM_0(X+Y)) , \qquad X \in \mathfrak{t} . \]
Note that $P$ is well-defined, smooth and $W(G,T)$-invariant. By \eqref{exponentialtransformation}, its Fourier coefficient with respect to $\lambda \in \Gamma^*$ (i.e.\ the character $\exp (2 \pi X) \mapsto e^{2 \pi i \lambda (X)}$ on $T$) is
\[ \begin{split} \widehat{P}(\lambda ) &= \int_{\mathfrak{t}/\Gamma} P (\exp (2 \pi X)) e^{-2 \pi i \lambda (X)} \, \mathrm{d} \mu_{\mathfrak{t}/\Gamma} (X) \\ &= \int_{\mathfrak{t}} (aM_0)^r F(aM_0 X) e^{-2 \pi i \lambda (X)} \, \mathrm{d} m (X) \\ &= \eta (\lambda^* / (aM_0)) , \end{split} \]
where $\lambda^*$ is the unique element in $\mathfrak{t}$ with $\lambda (X) = (\lambda^*, X)$. By the construction of $\eta$, $\widehat{P}(\lambda)=0$ whenever $|\lambda| \ge aM_0$; consequently, $P$ is a $W(G,T)$-invariant trigonometric polynomial on $T$ with degree $<aM_0$. Observe also that
\[ \frac{\delta_G (t^{M_0})}{\delta_G (t)} = \prod_{\alpha \in R} \frac{e^{2 \pi i M_0 \alpha (X)}-1}{e^{2 \pi i \alpha (X)}-1} \qquad (t=\exp (2 \pi X)) \]
is a $W(G,T)$-invariant trigonometric polynomial on $T$ of degree $\le |2 \rho^+| (M_0-1)$. Hence $P(t) \delta_G (t^{M_0})/\delta_G (t)$ is a $W(G,T)$-invariant trigonometric polynomial on $T$ of degree $<aM_0+|2 \rho^+| (M_0-1)<M$. It follows (see e.g.\ \cite[Lemma 1]{Cartwright}), that there exists a central trigonometric polynomial $K_M$ on $G$ of degree $<M$ --- that is, a function $K_M: G \to \mathbb{C}$ of the form $K_M=\sum_{\pi \in \widehat{G}, |\lambda_{\pi}|<M} a_{\pi} \chi_{\pi}$ --- such that
\[ K_M (t) = P(t) \frac{\delta_G (t^{M_0})}{\delta_G (t)} \qquad \textrm{for all } t \in T. \]

\subsection{The smoothing procedure}\label{smoothingsection}

First, we estimate the coefficients in $K_M=\sum_{\pi \in \widehat{G}, |\lambda_{\pi}|<M} a_{\pi} \chi_{\pi}$. Let $W(\pi)$ denote the set of weights of a representation $\pi \in \widehat{G}$. Since there exists a unitary matrix $U$ such that $U \pi(\exp (2 \pi X)) U^* = \mathrm{diag} \, (e^{2 \pi i \mu (X)} \, : \, \mu \in W(\pi))$, $X \in \mathfrak{t}$, we have
\[ \chi_{\pi} (\exp (2 \pi X)) = \sum_{\mu \in W(\pi )} e^{2 \pi i \mu (X)}, \qquad X \in \mathfrak{t}. \]
Therefore
\[ \begin{split} a_{\pi} &= \int_G K_M \overline{\chi_{\pi}} \, \mathrm{d} \mu_G \\ &= \frac{1}{|W(G,T)|} \int_{\mathfrak{t}/\Gamma} P(\exp (2 \pi X)) \delta_G (\exp (2 \pi M_0 X)) \overline{\chi_{\pi} (\exp (2 \pi X))} \, \mathrm{d} \mu_{\mathfrak{t}/\Gamma} (X) \\ &= \int_{\mathfrak{t}} (aM_0)^r F(aM_0 X) \bigg( \sum_{\lambda \in \Gamma_{\mathrm{root}}^*} \frac{c_{\lambda}}{|W(G,T)|} e^{2 \pi i M_0 \lambda (X)} \bigg) \bigg( \sum_{\mu \in W(\pi )} e^{-2 \pi i \mu (X)} \bigg) \, \mathrm{d}m(X) \\ &= \sum_{\mu \in W(\pi )} \sum_{\lambda \in \Gamma_{\mathrm{root}}^*} \frac{c_{\lambda}}{|W(G,T)|} \eta (-\lambda^* /a+\mu^* /(aM_0)) ,  \end{split} \]
where $\lambda^* \in \mathfrak{t}$ is such that $\lambda (X) = (\lambda^*, X)$. By the construction of $\eta$, for any given $\mu \in W(\pi)$ there is at most one $\lambda \in \Gamma_{\mathrm{root}}^*$ for which $\eta (-\lambda^* /a+\mu^* /(aM_0)) \neq 0$. Recalling that $|c_{\lambda}| \le |W(G,T)|$ and $0 \le \eta \le 1$, it follows that the coefficients of $K_M$ satisfy $|a_{\pi}| \le d_{\pi}$. For the trivial character the only nonvanishing term is $\lambda =0$; in particular, $\int_G K_M \, \mathrm{d} \mu_G =(c_0/|W(G,T)|) \eta (0)=1$.

\begin{remark} In fact, the only nonvanishing term is $\lambda =0$ whenever $|\lambda_{\pi}| \le aM_0$. Assuming in addition, that $\eta (X)=1$ for all $|X| \le 1/2$, we thus have $a_{\pi}=d_{\pi}$ whenever $|\lambda_{\pi}| \le aM_0/2$. In other words, $f*K_M=f$ for any central trigonometric polynomial of the form $f=\sum_{\pi \in \widehat{G}, |\lambda_{\pi}|\le aM_0/2} b_{\pi} \chi_{\pi}$. Thus $K_M$ is an analogue of the de la Vall\'ee Poussin kernel, although its construction is not based on the Fej\'er kernel.
\end{remark}

\begin{prop}\label{smoothingprop} For any $f \in \mathcal{F}_g$ and any real $M \ge |2\rho^+|+a$,
\[ \left| \int_G f \, \mathrm{d} \nu_1 - \int_G f \, \mathrm{d} \nu_2 \right| \le \psi (M) + \sum_{\substack{\pi \in \widehat{G} \\ |\lambda_{\pi}|<M}} d_{\pi} \| \widehat{f} (\pi ) \|_{\mathrm{HS}} \cdot \| \widehat{\nu_1} (\pi ) - \widehat{\nu_2} (\pi ) \|_{\mathrm{HS}} , \]
where $\psi$ is as in Theorem \ref{WgBerryEsseen}.
\end{prop}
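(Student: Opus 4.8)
The plan is to estimate the two terms on the right-hand side of the basic decomposition \eqref{basicestimate} separately and uniformly over $f \in \mathcal{F}_g$: the smoothing error $2\|f-f*K_M\|_{\infty}$ will be bounded by $\psi(M)$, and the difference $\left|\int_G f*K_M\,\mathrm{d}\nu_1 - \int_G f*K_M\,\mathrm{d}\nu_2\right|$ by the Fourier sum. Taking the supremum over $f\in\mathcal{F}_g$ and invoking Kantorovich duality then also gives the bound for $W_g$. Since $f*K_M$ is a trigonometric polynomial of degree $<M$ by the construction in Section \ref{kernelsection}, the second term is the easier one.

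For the Fourier term, I would invoke Fourier inversion on $G$: writing $\int_G h\,\mathrm{d}\nu_j$ as a sum over $\widehat{G}$ of traces of $\widehat{h}(\pi)$ against $\widehat{\nu_j}(\pi)$, subtracting, and applying the Cauchy--Schwarz inequality for the Hilbert--Schmidt inner product gives $\left|\int_G h\,\mathrm{d}\nu_1 - \int_G h\,\mathrm{d}\nu_2\right| \le \sum_{\pi} d_{\pi}\|\widehat{h}(\pi)\|_{\mathrm{HS}}\|\widehat{\nu_1}(\pi)-\widehat{\nu_2}(\pi)\|_{\mathrm{HS}}$ for any $h \in C(G)$. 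It then remains to control $\widehat{f*K_M}(\pi)$: since $K_M=\sum_{|\lambda_{\pi}|<M}a_{\pi}\chi_{\pi}$ is central, Schur orthogonality gives $\widehat{K_M}(\pi)=(a_{\pi}/d_{\pi})I$, hence $\widehat{f*K_M}(\pi)=(a_{\pi}/d_{\pi})\widehat{f}(\pi)$; this vanishes for $|\lambda_{\pi}|\ge M$, and the bound $|a_{\pi}|\le d_{\pi}$ established in Section \ref{smoothingsection} yields $\|\widehat{f*K_M}(\pi)\|_{\mathrm{HS}}\le\|\widehat{f}(\pi)\|_{\mathrm{HS}}$. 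Applying the displayed inequality to $h=f*K_M$ produces exactly the Fourier sum in the proposition.

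For the smoothing error, I would use $\int_G K_M\,\mathrm{d}\mu_G=1$ (shown at the end of Section \ref{smoothingsection}) to write $(f*K_M)(x)-f(x)=\int_G\big(f(xy^{-1})-f(x)\big)K_M(y)\,\mathrm{d}\mu_G(y)$. Because the inner product on $\mathfrak{g}$ is $\mathrm{Ad}$-invariant, the geodesic metric $\rho$ is bi-invariant, so $\rho(xy^{-1},x)=\rho(y,e)$ and, crucially, $y\mapsto g(\rho(y,e))$ is a class function. Hence $f\in\mathcal{F}_g$ gives $\|f-f*K_M\|_{\infty}\le\int_G g(\rho(y,e))|K_M(y)|\,\mathrm{d}\mu_G(y)$, and the Weyl integral formula together with \eqref{exponentialtransformation} turns the right-hand side into an integral over $\mathfrak{t}/\Gamma$ of $g(\rho(\exp(2\pi X),e))\,|K_M(\exp(2\pi X))|\,\delta_G(\exp(2\pi X))$. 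Using $K_M(\exp(2\pi X))\,\delta_G(\exp(2\pi X))=P(\exp(2\pi X))\,\delta_G(\exp(2\pi M_0X))$ and $\delta_G\ge0$, the factor $\delta_G(\exp(2\pi X))$ cancels; unfolding the $\Gamma$-periodization in the definition of $P$ (legitimate since $g(\rho(\exp(2\pi X),e))$ and $\delta_G(\exp(2\pi M_0X))$ are $\Gamma$-periodic), substituting $X\mapsto X/M_0$, and bounding $\rho(\exp(2\pi X/M_0),e)\le 2\pi|X|/M_0$ — valid because one-parameter subgroups are geodesics of a bi-invariant metric and $g$ is nondecreasing — leads precisely to $\psi(M)/2$, so that $2\|f-f*K_M\|_{\infty}\le\psi(M)$.

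The main obstacle I anticipate is this last part: recognizing $g(\rho(\cdot,e))$ as a class function so that the Weyl integral formula is available, and then carrying out the periodization unfolding and the rescaling $X\mapsto X/M_0$ so that the sup-norm approximation error matches the integrand defining $\psi(M)$ term by term, while keeping the normalizing constants $\mathrm{Vol}(\mathfrak{t}/\Gamma)$ and $(aM_0)^r$ correctly accounted for. The Fourier term, by contrast, is essentially immediate once the coefficient bound $|a_{\pi}|\le d_{\pi}$ is in hand.
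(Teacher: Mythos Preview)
Your proposal is correct and follows essentially the same route as the paper's proof: both handle the Fourier term via $|a_\pi|\le d_\pi$ and the Cauchy--Schwarz inequality for the Hilbert--Schmidt inner product (you compute $\widehat{K_M}(\pi)=(a_\pi/d_\pi)I$ first, while the paper splits $K_M=\sum a_\pi\chi_\pi$ and evaluates $\int f*\chi_\pi\,\mathrm{d}\nu$ directly, which amounts to the same thing), and both bound the smoothing error by applying the Weyl integral formula to the class function $g(\rho(e,\cdot))|K_M(\cdot)|$, unfolding the $\Gamma$-sum in $P$, using $\rho(e,\exp(2\pi X))\le 2\pi|X|$, and rescaling to land on $\psi(M)/2$. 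The only caveat is that your claim ``for any $h\in C(G)$'' in the Fourier step should really read ``for any trigonometric polynomial $h$,'' but since you only apply it to $h=f*K_M$ this is harmless.
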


\begin{proof} Recall \eqref{basicestimate}. We first show that $2 \| f-f*K_M \|_{\infty} \le \psi (M)$; with somewhat weaker constant factors this was proved in \cite{Cartwright}. Since the geodesic metric $\rho$ is translation invariant both from the left and from the right, from the Weyl integral formula, \eqref{exponentialtransformation} and \eqref{deltagtrigpolynomial} we deduce
\[ \begin{split} \| f&-f*K_M \|_{\infty} \\ &= \sup_{x \in G} \left| \int_G \left( f(x)-f(xy^{-1}) \right) K_M(y) \, \mathrm{d}\mu_G (y) \right| \\ &\le \int_G g(\rho (e,y)) |K_M(y)| \, \mathrm{d}\mu_G (y) \\ &= \frac{1}{|W(G,T)|} \int_{\mathfrak{t}/\Gamma} g(\rho (e,\exp (2 \pi X))) |P(\exp (2 \pi X))| \delta_G( \exp (2 \pi M_0 X)) \, \mathrm{d} \mu_{\mathfrak{t}/\Gamma} (X) \\ &\le \frac{1}{|W(G,T)|} \int_{\mathfrak{t}} g(2 \pi |X|) (aM_0)^r |F(aM_0X)| \bigg( \prod_{\alpha \in R^+} |e^{2 \pi i M_0 \alpha (X)}-1|^2 \bigg) \, \mathrm{d} m(X) \\ &= \psi (M)/2. \end{split} \]
In the penultimate step we used that $g(\rho (e, \exp (2 \pi X))) \le g(2 \pi |X|)$, as the exponential map is a geodesic of unit speed. Finally, using $|a_{\pi}| \le d_{\pi}$ we get
\[ \begin{split} \left| \int_G f*K_M \, \mathrm{d}\nu_1 - \int_G f*K_M \, \mathrm{d}\nu_2 \right| &\le \sum_{\substack{\pi \in \widehat{G}\\ |\lambda_{\pi}|< M}} d_{\pi} \left| \int_G f*\chi_{\pi} \, \mathrm{d}\nu_1 - \int_G f*\chi_{\pi} \, \mathrm{d}\nu_2 \right| \\ &= \sum_{\substack{\pi \in \widehat{G}\\ |\lambda_{\pi}|<M}} d_{\pi} \left| \mathrm{tr} \left( \widehat{f}(\pi)^* \left( \widehat{\nu_1}(\pi) - \widehat{\nu_2}(\pi) \right) \right) \right| \\ &\le \sum_{\substack{\pi \in \widehat{G}\\ |\lambda_{\pi}|<M}} d_{\pi} \| \widehat{f}(\pi) \|_{\mathrm{HS}} \cdot \left\| \widehat{\nu_1}(\pi) - \widehat{\nu_2}(\pi) \right\|_{\mathrm{HS}} , \end{split} \]
which proves the claim.
\end{proof}

\subsection{Decay of the Fourier transform}\label{fourierdecay}

We prove a decay estimate for the Fourier transform in somewhat greater generality than what we need, and then finish the proof of Theorem \ref{WgBerryEsseen}.
\begin{prop}\label{fourierdecayprop}
Assume that $f \in L^1 (G,\mu_G)$ satisfies
\[ \left( \int_G |f(xh)-f(x)|^2 \, \mathrm{d}\mu_G (x) \right)^{1/2} \le g(\rho (h,e)) \]
for all $h \in G$ with some nondecreasing function $g: [0,\infty ) \to [0, \infty )$. Then for any real number $M>0$,
\[ \sum_{\substack{\pi \in \widehat{G} \\ |\lambda_{\pi}| \le M}} d_{\pi} \kappa_{\pi} \| \widehat{f} (\pi) \|_{\mathrm{HS}}^2 \le \inf_{0<c<2(\sqrt{n^2+n}-n)} \frac{n}{1-c-c^2/(4n)} \cdot \frac{g \left( \frac{c}{nM} \right)^2}{\left( \frac{c}{nM} \right)^2} . \]
\end{prop}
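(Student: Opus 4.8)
The plan is to bound the left-hand side by a weighted integral of the squared $L^2$-modulus of continuity of $f$, using the fact that $\kappa_\pi$ is the Laplace eigenvalue and that a suitable average of $\pi(\exp(tX))$ over small $t$ and unit $X\in\mathfrak g$ extracts $\kappa_\pi$ from the Fourier side. Concretely, I would start from the hypothesis $\|f(\cdot\,h)-f\|_2 \le g(\rho(h,e))$ and specialize $h=\exp(tX)$ for a unit vector $X\in\mathfrak g$ and small $t>0$, so that $\rho(\exp(tX),e)\le t$ and hence $\|f(\cdot\exp(tX))-f\|_2\le g(t)$. Expanding in the Peter--Weyl basis, $\widehat{f(\cdot\exp(tX))}(\pi) = \widehat f(\pi)\,\pi(\exp(tX))^*$, so by Parseval
\[
\sum_{\pi\in\widehat G} d_\pi \,\bigl\| \widehat f(\pi)\bigl(\pi(\exp(tX))^* - I\bigr)\bigr\|_{\mathrm{HS}}^2 = \|f(\cdot\exp(tX))-f\|_2^2 \le g(t)^2 .
\]
Averaging this over $X$ ranging through the orthonormal basis $X_1,\dots,X_n$ of $\mathfrak g$ (and then over a small range of $t$) should turn the factor $\sum_k \|\widehat f(\pi)(\pi(\exp(tX_k))^*-I)\|_{\mathrm{HS}}^2$ into something comparable to $\kappa_\pi \|\widehat f(\pi)\|_{\mathrm{HS}}^2$ up to controlled error, since $\frac{d^2}{dt^2}\big|_{t=0}\sum_k \pi(\exp(tX_k)) = \Delta\pi = -\kappa_\pi\pi$ and $\pi(\exp(tX_k))^*-I \approx -tX_k - \tfrac{t^2}{2}X_k^2$ entrywise.

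The key estimate will be a lower bound, valid for each fixed $\pi$ with $\kappa_\pi = |\lambda_\pi|^2 + 2(\lambda_\pi,\rho^+)$ not too large relative to $1/t$, of the form
\[
\sum_{k=1}^n \bigl\|\widehat f(\pi)\bigl(\pi(\exp(tX_k))^* - I\bigr)\bigr\|_{\mathrm{HS}}^2 \;\ge\; \Bigl(t^2\kappa_\pi - \tfrac{t^4}{4}\kappa_\pi^2\cdot(\text{something})\Bigr)\|\widehat f(\pi)\|_{\mathrm{HS}}^2 .
\]
To get this cleanly I would diagonalize: on the representation space of $\pi$, $\pi(\exp(tX_k))$ is unitary, and $\sum_k (I - \mathrm{Re}\,\pi(\exp(tX_k)))$ acting on a weight vector or more robustly using that $-\sum_k X_k^2$ acts as $\kappa_\pi$, one obtains $\sum_k \|(\pi(\exp(tX_k))-I)v\|^2 \ge$ (a quadratic in $t$) $\cdot \kappa_\pi |v|^2$ after Taylor expansion with the cubic remainder controlled by $\kappa_\pi t^3$ (since all eigenvalues of $-X_k^2$ summed give $\kappa_\pi$ and higher derivatives are dominated accordingly). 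Setting $t = c/(nM)$ and restricting the sum to $|\lambda_\pi|\le M$ — which forces $\kappa_\pi \le M^2 + O(M) \le$ (a bound like $(1+o(1))M^2$, handled by the precise constants) — makes the error term a fixed fraction $c + c^2/(4n)$ of the main term, so that $t^2\kappa_\pi(1 - c - c^2/(4n))$ survives. Summing over $\pi$ with $|\lambda_\pi|\le M$ against $g(t)^2$ and dividing by $t^2(1-c-c^2/(4n))/n$ then yields exactly the claimed bound, and taking the infimum over admissible $c$ finishes it. The constraint $0 < c < 2(\sqrt{n^2+n}-n)$ is presumably exactly what is needed for $1 - c - c^2/(4n) > 0$ together with the bound $\kappa_\pi \le M^2 + nM$ (indeed $1 - c - c^2/(4n) > 0 \iff c < 2(\sqrt{n^2+n}-n)$).

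The main obstacle I anticipate is controlling the cubic (and higher) Taylor remainder in $\pi(\exp(tX_k))$ uniformly over all $\pi$ with $|\lambda_\pi| \le M$ with the \emph{sharp} constant, rather than just $O(\kappa_\pi t^3)$: one must track that $\sum_k \|\widehat f(\pi)(\pi(\exp(tX_k))^* - I)\|_{\mathrm{HS}}^2 \ge (t^2\kappa_\pi - \tfrac{t^4}{4}\kappa_\pi^2/n \cdot (\dots))\|\widehat f(\pi)\|_{\mathrm{HS}}^2$ with the precise coefficient $c^2/(4n)$ appearing, which requires being careful that the relevant operator $\sum_k (X_k^2)$ (the Casimir) and its powers act diagonally and that cross terms among distinct $X_k$ average out. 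I would handle this by working on a weight-vector eigenbasis for the maximal torus where the relevant quantities become scalar (eigenvalues of weights), using $\sum_{\mu\in W(\pi)}|\mu|^2 \cdot (\text{multiplicity weighting}) $ relations, and Jensen/convexity to pass from the exact $\kappa_\pi$ to the inequality; alternatively, a direct spectral-calculus argument $\|\pi(\exp(tX))-I\|^2 = 2 - 2\cos(\cdot)$ on each eigenspace of the (skew-adjoint) operator $d\pi(X)$, then $\sum_k$ and use $2 - 2\cos\theta \ge \theta^2 - \theta^4/12$. The rest — reduction to $h=\exp(tX)$, Parseval, restriction to $|\lambda_\pi|\le M$, and optimizing $c$ — should be routine.
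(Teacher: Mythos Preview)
Your approach is the paper's: Parseval on $f(\cdot\,h)-f$ with $h=\exp(uX_k)$, sum over an orthonormal basis $X_1,\dots,X_n$ of $\mathfrak g$, Taylor-expand to extract $\kappa_\pi$, and set $u=c/(nM)$. No averaging over a range of $t$ is needed --- a single value of $u$ suffices --- and the constraint on $c$ is, as you surmised, exactly $1-c-c^2/(4n)>0$.

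Your ``main obstacle'' has a cleaner resolution than the weight-basis or $2-2\cos\theta$ arguments you sketch. The paper works entirely in operator norm: from $F(u)=\pi(\exp(uX))$, $F'(u)=F(u)\,d\pi(X)$ and unitarity one gets the first-order Taylor bound $\|\pi(\exp(uX))-I-u\,d\pi(X)\|_{\mathrm{op}}\le \tfrac{u^2}{2}\|d\pi(X)\|_{\mathrm{op}}^2$, and Sugiura's estimate $\|d\pi(X)\|_{\mathrm{op}}\le|\lambda_\pi|\,|X|$ (the eigenvalues of $d\pi(X)$ are $i\mu(X)$ for weights $\mu$, and $|\mu|\le|\lambda_\pi|$) then yields
\[
\sum_{k=1}^n(\pi(h_k)-I)^*(\pi(h_k)-I)=u^2\kappa_\pi I+E,\qquad \|E\|_{\mathrm{op}}\le n|u|^3|\lambda_\pi|^3+n\tfrac{u^4}{4}|\lambda_\pi|^4.
\]
Here the error is governed by $|\lambda_\pi|$, not $\kappa_\pi$; using only $|\lambda_\pi|^2\le\kappa_\pi$ for the main term and $|\lambda_\pi|\le M$ for the error gives the factor $1-c-c^2/(4n)$ immediately, with no need for $\kappa_\pi\le M^2+nM$, convexity, or simultaneous diagonalization of the $d\pi(X_k)$. (Minor point: with the paper's convention $\widehat f(\pi)=\int f\,\pi^*$, one has $\widehat{f(\cdot\,h)}(\pi)=\pi(h)\widehat f(\pi)$, so the relevant object is $(\pi(h)-I)\widehat f(\pi)$ rather than $\widehat f(\pi)(\pi(h)^*-I)$; this does not affect the Hilbert--Schmidt norms.)
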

\noindent If $g(t)=t^p$ with some $0<p\le 1$, we can choose e.g.\ $c=(\sqrt{17}-3)/2$ (this is optimal in the worst case $p \to 0$, $n=1$) yielding
\begin{equation}\label{fourierdecayp}
\sum_{\substack{\pi \in \widehat{G} \\ |\lambda_{\pi}| \le M}} d_{\pi} \kappa_{\pi} \| \widehat{f} (\pi) \|_{\mathrm{HS}}^2 \le 9 n^{3-2p} M^{2-2p}.
\end{equation}
In the special case $p=1$ the factor $9$ can be removed, since the optimal choice is then to let $c \to 0$ (and $M \to \infty$). An estimate similar to \eqref{fourierdecayp} has recently been proved by Daher, Delgado and Ruzhansky \cite{Daher}, with an unspecified implied constant in the place of $9n^{3-2p}$. Our main improvement is that this implied constant does not depend on $f$; a crucial feature in the study of the $p$-Wasserstein metric.

\begin{proof}[Proof of Proposition \ref{fourierdecayprop}] We follow ideas in \cite{Daher}. For the sake of simplicity, we shall think about $\pi \in \widehat{G}$ as a $d_{\pi} \times d_{\pi}$ unitary matrix-valued function on $G$. For any matrix $A \in \mathbb{C}^{d_{\pi} \times d_{\pi}}$ let $\| A \|_{\mathrm{op}}=\sup \{ |Av| \, : \, v \in \mathbb{C}^{d_{\pi}}, |v|=1 \}$ and $\| A \|_{\mathrm{HS}}=\sqrt{\mathrm{tr}\, (A^*A)}$ denote the operator norm and the Hilbert--Schmidt norm, respectively. The operator norm is submultiplicative; further, for all $A,B \in \mathbb{C}^{d_{\pi} \times d_{\pi}}$ we have $\| AB \|_{\mathrm{HS}} \le \| A \|_{\mathrm{op}} \cdot \| B \|_{\mathrm{HS}}$, and the Cauchy--Schwarz inequality $|\mathrm{tr}\, (A^*B)| \le \| A \|_{\mathrm{HS}} \cdot \| B \|_{\mathrm{HS}}$.

One readily verifies the identity
\[ \left( \pi (h)-I_{d_{\pi}} \right) \widehat{f}(\pi ) = \int_G \left( f(xh)-f(x) \right) \pi (x)^* \, \mathrm{d}\mu_G (x) , \]
where $I_{d_{\pi}}$ denotes the $d_{\pi} \times d_{\pi}$ identity matrix. By the Parseval formula and the assumption on $f$, for any $h \in G$ we have
\[ \begin{split} \sum_{\pi \in \widehat{G}} d_{\pi} \mathrm{tr} \left( \left( \pi (h)-I_{d_{\pi}} \right)^* \left( \pi (h)-I_{d_{\pi}} \right) \widehat{f}(\pi ) \widehat{f}(\pi )^* \right) &= \int_G |f(xh)-f(x)|^2 \, \mathrm{d} \mu_G (x) \\ &\le g(\rho (h,e))^2 . \end{split} \]
Since the exponential map is a geodesic, we have $\rho (\exp (uX),e) \le |uX|$ for all $X \in \mathfrak{g}$ and $u \in \mathbb{R}$. For any $h=\exp (uX)$ the previous estimate thus yields
\begin{equation}\label{upperbound}
\sum_{\pi \in \widehat{G}} d_{\pi} \mathrm{tr} \left( \left( \pi (h)-I_{d_{\pi}} \right)^* \left( \pi (h)-I_{d_{\pi}} \right) \widehat{f}(\pi ) \widehat{f}(\pi )^* \right)\le g(|uX|)^2.
\end{equation}

Next, we wish to find a lower estimate. For any $X \in \mathfrak{g}$ let
\[ \mathrm{d} \pi (X) = \frac{\mathrm{d}}{\mathrm{d}u} \pi (\exp (uX)) \mid_{u=0} \in \mathbb{C}^{d_{\pi} \times d_{\pi}} \]
denote the derived representation of $\pi$.
\begin{lem}[Taylor expansion of degree 1]\label{taylorlemma} For any $X \in \mathfrak{g}$ and any $u \in \mathbb{R}$,
\[ \left\| \pi (\exp(uX)) - I_{d_{\pi}} - u \cdot \mathrm{d}\pi (X) \right\|_{\mathrm{op}} \le \frac{u^2}{2} \| \mathrm{d} \pi (X) \|_{\mathrm{op}}^2 . \]
\end{lem}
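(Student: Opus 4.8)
The plan is to prove the estimate from the integral form of Taylor's remainder, iterated once, with the unitarity of $\pi$ doing the essential work. Write $\gamma(u) = \pi(\exp(uX)) \in \mathbb{C}^{d_\pi \times d_\pi}$. Since $\pi$ is a homomorphism and $u \mapsto \exp(uX)$ is a one-parameter subgroup of $G$, we have $\gamma(u+s) = \gamma(u)\gamma(s)$ for all $u, s \in \mathbb{R}$; differentiating in $s$ at $s = 0$ shows that $\gamma$ is smooth with $\gamma(0) = I_{d_\pi}$ and $\gamma'(u) = \gamma(u)\,\mathrm{d}\pi(X)$, the derived representation being exactly the $s$-derivative of $\pi(\exp(sX))$ at $0$.

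First I would bound $\|\gamma(u) - I_{d_\pi}\|_{\mathrm{op}}$. From $\gamma(u) - I_{d_\pi} = \int_0^u \gamma(t)\,\mathrm{d}\pi(X)\,\mathrm{d}t$, the submultiplicativity of the operator norm, and the fact that $\pi$ is unitary, so that $\|\gamma(t)\|_{\mathrm{op}} = 1$ for every $t$, one obtains $\|\gamma(u) - I_{d_\pi}\|_{\mathrm{op}} \le |u|\,\|\mathrm{d}\pi(X)\|_{\mathrm{op}}$. Next I would apply the integral representation once more, now to the first-order remainder: since $\gamma'(t) - \mathrm{d}\pi(X) = (\gamma(t) - I_{d_\pi})\,\mathrm{d}\pi(X)$, integrating from $0$ to $u$ gives $\gamma(u) - I_{d_\pi} - u\,\mathrm{d}\pi(X) = \int_0^u (\gamma(t) - I_{d_\pi})\,\mathrm{d}\pi(X)\,\mathrm{d}t$. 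Taking operator norms, inserting the bound from the previous step, and using $\bigl|\int_0^u |t|\,\mathrm{d}t\bigr| = u^2/2$, we arrive at $\|\gamma(u) - I_{d_\pi} - u\,\mathrm{d}\pi(X)\|_{\mathrm{op}} \le \tfrac{u^2}{2}\,\|\mathrm{d}\pi(X)\|_{\mathrm{op}}^2$, which is the asserted inequality.

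The one thing to be careful about — and the only real obstacle — is to resist bounding $\gamma(u) - I_{d_\pi} - u\,\mathrm{d}\pi(X) = \exp(u\,\mathrm{d}\pi(X)) - I_{d_\pi} - u\,\mathrm{d}\pi(X)$ by its matrix power series term by term, which would yield only the weaker $e^{|u|\|\mathrm{d}\pi(X)\|_{\mathrm{op}}} - 1 - |u|\|\mathrm{d}\pi(X)\|_{\mathrm{op}}$. The point of iterating the integral formula is precisely that unitarity pins $\|\gamma(t)\|_{\mathrm{op}}$ to $1$ instead of letting it grow, which is what produces the clean quadratic remainder. Beyond this observation the argument is routine, and it does not even require identifying $\gamma$ with the matrix exponential: only the cocycle identity $\gamma(u+s) = \gamma(u)\gamma(s)$ and the unitarity of $\pi$ are used.
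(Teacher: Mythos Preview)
Your proof is correct and essentially identical to the paper's: both write $\gamma(u)=\pi(\exp(uX))$, compute $\gamma'(u)=\gamma(u)\,\mathrm{d}\pi(X)$, use unitarity to get $\|\gamma(u)-I_{d_\pi}\|_{\mathrm{op}}\le |u|\,\|\mathrm{d}\pi(X)\|_{\mathrm{op}}$ from the integral of $\gamma'$, and then iterate once to bound the first-order remainder by $\int_0^{|u|} t\,\|\mathrm{d}\pi(X)\|_{\mathrm{op}}^2\,\mathrm{d}t$. Your closing remark about why the naive power-series bound is weaker is a nice addition not present in the paper.
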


\begin{proof}[Proof of Lemma \ref{taylorlemma}]
We simply apply the usual Taylor formula to the matrix-valued function $F(u)=\pi (\exp (uX))$. Since $\pi$ is a homomorphism, we have $F'(u)=\pi (\exp (uX)) \mathrm{d} \pi (X)$. First, note that for any $u \in \mathbb{R}$,
\[ \begin{split} \left\| \pi (\exp (uX)) - I_{d_{\pi}} \right\|_{\mathrm{op}} &= \left\| \int_0^u \pi (\exp (yX)) \mathrm{d} \pi (X) \, \mathrm{d}y \right\|_{\mathrm{op}} \\ &\le \int_0^{|u|} \| \pi (\exp (yX)) \|_{\mathrm{op}} \cdot \| \mathrm{d} \pi (X) \|_{\mathrm{op}} \, \mathrm{d} y \\ &= |u| \cdot \| \mathrm{d} \pi (X) \|_{\mathrm{op}} . \end{split} \]
We used the fact that $\pi (\exp (yX))$ is a unitary matrix and thus has operator norm $1$. Therefore
\[ \begin{split} \left\| \pi (\exp (uX)) - I_{d_{\pi}} - u \cdot \mathrm{d} \pi (X) \right\|_{\mathrm{op}} &= \left\| \int_0^u \left( \pi (\exp (yX)) - I_{d_{\pi}} \right) \mathrm{d} \pi (X) \, \mathrm{d}y \right\|_{\mathrm{op}} \\ &\le \int_0^{|u|} \left\| \pi (\exp (yX)) - I_{d_{\pi}}  \right\|_{\mathrm{op}} \cdot \| \mathrm{d} \pi (X) \|_{\mathrm{op}} \, \mathrm{d} y \\ &\le \int_0^{|u|} |y| \cdot \| \mathrm{d} \pi (X) \|_{\mathrm{op}}^2 \, \mathrm{d}y \\ &= \frac{u^2}{2} \| \mathrm{d} \pi (X) \|_{\mathrm{op}}^2 .  \end{split} \]
\end{proof}

\begin{lem}[Sugiura]\label{sugiuralemma} For any $X \in \mathfrak{g}$, we have $\| \mathrm{d} \pi (X) \|_{\mathrm{op}} \le |\lambda_{\pi}| \cdot |X|$.
\end{lem}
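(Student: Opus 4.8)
The plan is to reduce to the maximal torus and then use the description of $\mathrm{d}\pi$ on weight spaces. First I would record that since $\pi$ is unitary, differentiating the identity $\pi(\exp(uX))^*\pi(\exp(uX)) = I_{d_{\pi}}$ at $u = 0$ shows that $\mathrm{d}\pi(X)$ is skew-Hermitian; in particular it is a normal matrix, so its operator norm equals the largest modulus of its eigenvalues.

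Next I would treat the case $X \in \mathfrak{t}$. Choosing the unitary $U$ as in Section \ref{smoothingsection}, so that $U\pi(\exp(2\pi Y))U^* = \mathrm{diag}(e^{2\pi i\mu(Y)} : \mu \in W(\pi))$, and writing $X = 2\pi Y$ (that is, $Y = X/(2\pi)$), one gets $U\pi(\exp(uX))U^* = \mathrm{diag}(e^{iu\mu(X)} : \mu \in W(\pi))$, hence $U\,\mathrm{d}\pi(X)\,U^* = \mathrm{diag}(i\mu(X) : \mu \in W(\pi))$. Therefore $\|\mathrm{d}\pi(X)\|_{\mathrm{op}} = \max_{\mu \in W(\pi)} |\mu(X)|$. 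By Cauchy--Schwarz, $|\mu(X)| = |(\mu^*, X)| \le |\mu|\cdot|X|$, where $\mu^*\in\mathfrak{t}$ is the element with $\mu(Z)=(\mu^*,Z)$ for all $Z$; and every weight $\mu$ of $\pi$ satisfies $|\mu| \le |\lambda_{\pi}|$. This last inequality is the \emph{crucial input}: the set $W(\pi)$ is invariant under the Weyl group $W(G,T)$ and is contained in the convex hull of the orbit $W(G,T)\cdot\lambda_{\pi}$, and since $W(G,T)$ acts on $\mathfrak{t}^*$ by orthogonal transformations, every point of that convex hull has norm at most $|\lambda_{\pi}|$. Combining these gives $\|\mathrm{d}\pi(X)\|_{\mathrm{op}} \le |\lambda_{\pi}|\cdot|X|$ for all $X \in \mathfrak{t}$.

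Finally I would pass to general $X \in \mathfrak{g}$. Since $G$ is compact and connected, $X = \mathrm{Ad}(g)X'$ for some $g \in G$ and some $X' \in \mathfrak{t}$ (every element of $\mathfrak{g}$ is conjugate into the Lie algebra of the fixed maximal torus). Differentiating $\pi(\exp(u\,\mathrm{Ad}(g)X')) = \pi(g)\pi(\exp(uX'))\pi(g)^{-1}$ at $u = 0$ yields $\mathrm{d}\pi(X) = \pi(g)\,\mathrm{d}\pi(X')\,\pi(g)^{-1}$, a conjugation by the unitary matrix $\pi(g)$, so $\|\mathrm{d}\pi(X)\|_{\mathrm{op}} = \|\mathrm{d}\pi(X')\|_{\mathrm{op}}$. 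Moreover $|X| = |X'|$ because the inner product $(\cdot,\cdot)$ is $\mathrm{Ad}$-invariant. The torus case then gives $\|\mathrm{d}\pi(X)\|_{\mathrm{op}} = \|\mathrm{d}\pi(X')\|_{\mathrm{op}} \le |\lambda_{\pi}|\cdot|X'| = |\lambda_{\pi}|\cdot|X|$, which is the assertion. The only non-formal point is the weight-norm inequality $|\mu| \le |\lambda_{\pi}|$; everything else is routine differentiation plus the $\mathrm{Ad}$-invariance of the inner product. An alternative to the convex-hull argument is to first replace $\mu$ by the dominant representative of its Weyl orbit and then use that $\lambda_{\pi} - \mu$ is a nonnegative integer combination of positive roots together with $(\lambda_{\pi},\alpha) \ge 0$ for $\alpha \in R^+$.
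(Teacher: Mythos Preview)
Your proof is correct and follows essentially the same line as the paper's: diagonalise $\mathrm{d}\pi$ via the weight decomposition to get eigenvalues $i\mu(X)$, then invoke $|\mu|\le|\lambda_{\pi}|$ together with Cauchy--Schwarz. The paper's version is terser---it cites Sugiura's original argument for the diagonal form and the weight inequality and simply observes that these already give the operator-norm bound---whereas you spell out the skew-Hermitian observation, the explicit reduction from $\mathfrak{g}$ to $\mathfrak{t}$ by $\mathrm{Ad}$-conjugation, and a justification of $|\mu|\le|\lambda_{\pi}|$ via the convex hull of the Weyl orbit; but the underlying mechanism is identical.
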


\begin{proof}[Proof of Lemma \ref{sugiuralemma}] In \cite[Theorem 2]{Sugiura} Sugiura stated and proved the estimate $\| \mathrm{d} \pi (X) \|_{\mathrm{HS}} \le \sqrt{d_{\pi}} |\lambda_{\pi}| \cdot |X|$. His proof is based on the fact that with some $d_{\pi} \times d_{\pi}$ unitary matrix $U$, we have $U \mathrm{d} \pi (X) U^* = \mathrm{diag} \left( i \lambda (X) \, : \, \lambda \in W(\pi) \right)$, where $W(\pi)$ is the set of weights of $\pi$. Further, we have $|\lambda| \le |\lambda_{\pi}|$ for all $\lambda \in W(\pi)$. Hence Sugiura's proof in fact yields the slightly stronger claim of Lemma \ref{sugiuralemma}.
\end{proof}

\begin{lem}\label{laplacelemma} Let $X_1, \dots, X_n$ be an orthonormal base in $\mathfrak{g}$. For any $u \in \mathbb{R}$, the points $h_k=\exp (uX_k)$ satisfy
\[ \sum_{k=1}^n \left( \pi (h_k)-I_{d_{\pi}} \right)^* \left( \pi (h_k)-I_{d_{\pi}} \right) = u^2 \kappa_{\pi} I_{d_{\pi}} + E \]
with some $E\in \mathbb{C}^{d_{\pi} \times d_{\pi}}$, $\| E \|_{\mathrm{op}} \le n |u|^3 |\lambda_{\pi}|^3 + n (u^4/4) |\lambda_{\pi}|^4$.
\end{lem}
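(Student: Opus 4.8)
The plan is to combine the degree-$1$ Taylor expansion of Lemma \ref{taylorlemma} with the algebraic description of the Laplace eigenvalue $\kappa_\pi$. First I would write, for each $k$,
\[ \pi(h_k) - I_{d_\pi} = u\, \mathrm{d}\pi(X_k) + R_k, \qquad \|R_k\|_{\mathrm{op}} \le \frac{u^2}{2}\|\mathrm{d}\pi(X_k)\|_{\mathrm{op}}^2 \le \frac{u^2}{2}|\lambda_\pi|^2, \]
where the last inequality uses Lemma \ref{sugiuralemma} together with $|X_k| = 1$. Expanding the product $(\pi(h_k)-I_{d_\pi})^*(\pi(h_k)-I_{d_\pi})$ then produces a main term $u^2\,\mathrm{d}\pi(X_k)^*\mathrm{d}\pi(X_k)$ plus three remainder terms, two of them linear in $R_k$ and one quadratic.

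Next I would identify the sum of the main terms with $u^2\kappa_\pi I_{d_\pi}$. Since $\pi(\exp(uX_k))$ is unitary for every $u$, differentiating at $u=0$ shows that $\mathrm{d}\pi(X_k)$ is skew-Hermitian, hence $\mathrm{d}\pi(X_k)^*\mathrm{d}\pi(X_k) = -\mathrm{d}\pi(X_k)^2$. The derived representation extends to an algebra homomorphism on the universal enveloping algebra of $\mathfrak{g}$, so from $\Delta = \sum_{k=1}^n X_kX_k$ and $\Delta\pi = -\kappa_\pi\pi$ (with $\Delta$ acting entrywise) one gets $\sum_{k=1}^n \mathrm{d}\pi(X_k)^2 = -\kappa_\pi I_{d_\pi}$, and therefore $\sum_{k=1}^n \mathrm{d}\pi(X_k)^*\mathrm{d}\pi(X_k) = \kappa_\pi I_{d_\pi}$. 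Multiplying by $u^2$ gives the claimed main term.

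Finally I would bound the error $E$, which is the sum over $k$ of the three remainder terms. By submultiplicativity of the operator norm, each of $\|u\,\mathrm{d}\pi(X_k)^*R_k\|_{\mathrm{op}}$ and $\|u\,R_k^*\mathrm{d}\pi(X_k)\|_{\mathrm{op}}$ is at most $|u|\cdot|\lambda_\pi|\cdot\tfrac{u^2}{2}|\lambda_\pi|^2 = \tfrac{|u|^3}{2}|\lambda_\pi|^3$, while $\|R_k^*R_k\|_{\mathrm{op}} \le \|R_k\|_{\mathrm{op}}^2 \le \tfrac{u^4}{4}|\lambda_\pi|^4$; adding the three contributions and summing over $k=1,\dots,n$ yields $\|E\|_{\mathrm{op}} \le n|u|^3|\lambda_\pi|^3 + n\tfrac{u^4}{4}|\lambda_\pi|^4$. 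The hard part will not be the estimation, which is routine, but rather the one conceptual point flagged above: justifying the passage from $\Delta = \sum_k X_kX_k$ and $\Delta\pi=-\kappa_\pi\pi$ to the matrix identity $\sum_k \mathrm{d}\pi(X_k)^2 = -\kappa_\pi I_{d_\pi}$, i.e.\ checking that differentiating the entries of $\pi$ twice along $X_k$ reproduces the action of $X_kX_k$ via $\mathrm{d}\pi$.
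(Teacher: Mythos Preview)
Your proposal is correct and follows essentially the same approach as the paper's proof: Taylor-expand via Lemma~\ref{taylorlemma}, identify the main term using $\mathrm{d}\pi(X_k)^*=-\mathrm{d}\pi(X_k)$ and $\sum_k \mathrm{d}\pi(X_k)^2=(\Delta\pi)(e)=-\kappa_\pi I_{d_\pi}$, and bound the three remainder pieces with Lemma~\ref{sugiuralemma}. The ``conceptual point'' you flag is handled in the paper simply by evaluating $(\Delta\pi)(e)$, which is exactly what you describe.
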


\begin{proof}[Proof of Lemma \ref{laplacelemma}] By Lemma \ref{taylorlemma} we can write
\[ \pi (h_k)-I_{d_{\pi}} = u \cdot \mathrm{d} \pi (X_k) + E_k \]
with some error matrix $E_k$ satisfying $\| E_k \|_{\mathrm{op}} \le (u^2/2) \| \mathrm{d} \pi (X_k) \|_{\mathrm{op}}^2$.
Therefore
\[ \sum_{k=1}^n \left( \pi (h_k)-I_{d_{\pi}} \right)^* \left( \pi (h_k)-I_{d_{\pi}} \right) = u^2 \sum_{k=1}^n \mathrm{d} \pi (X_k)^* \mathrm{d} \pi (X_k) +E \]
where
\[ \begin{split} \| E \|_{\mathrm{op}} &= \left\| \sum_{k=1}^n \left(u \cdot \mathrm{d} \pi (X_k)^* E_k + E_k^* u \cdot \mathrm{d} \pi (X_k) + E_k^*E_k  \right) \right\|_{\mathrm{op}} \\ &\le \sum_{k=1}^n \left( 2 |u| \cdot \| \mathrm{d} \pi (X_k) \|_{\mathrm{op}} \cdot \frac{u^2}{2} \| \mathrm{d} \pi (X_k) \|_{\mathrm{op}}^2 + \frac{u^4}{4} \| \mathrm{d} \pi (X_k) \|_{\mathrm{op}}^4 \right) .  \end{split} \]
By Lemma \ref{sugiuralemma}, the previous estimate yields $\| E \|_{\mathrm{op}} \le n |u|^3 |\lambda_{\pi}|^3 + n (u^4/4) |\lambda_{\pi}|^4$. On the other hand, we have $\mathrm{d} \pi (X)^* = -\mathrm{d} \pi (X)$, and by the definition of the Laplace--Beltrami operator,
\[ \sum_{k=1}^n \mathrm{d} \pi (X_k)^* \mathrm{d} \pi (X_k) = - \sum_{k=1}^n \mathrm{d} \pi (X_k) \mathrm{d} \pi (X_k) = -(\Delta \pi)(e) = \kappa_{\pi} I_{d_{\pi}} . \]
\end{proof}

We now finish the proof of Proposition \ref{fourierdecayprop}. Recall that $|\lambda_{\pi}|^2 \le \kappa_{\pi}$. From Lemma \ref{laplacelemma} we deduce that for any $u \in \mathbb{R}$,
\begin{equation}\label{lowerbound}
\begin{split} \mathrm{tr} \bigg( \sum_{k=1}^n \left( \pi (h_k)-I_{d_{\pi}} \right)^* &\left( \pi (h_k)-I_{d_{\pi}} \right) \widehat{f}(\pi ) \widehat{f}(\pi )^* \bigg) \\ &= \mathrm{tr} \left( u^2 \kappa_{\pi} \widehat{f}(\pi ) \widehat{f}(\pi )^*  \right) + \mathrm{tr} \left( E \widehat{f}(\pi ) \widehat{f}(\pi )^* \right) \\ &\ge u^2 \kappa_{\pi} \| \widehat{f}(\pi ) \|_{\mathrm{HS}}^2 - \| E \widehat{f}(\pi ) \|_{\mathrm{HS}} \cdot \| \widehat{f}(\pi ) \|_{\mathrm{HS}} \\ &\ge u^2 \kappa_{\pi} \| \widehat{f}(\pi ) \|_{\mathrm{HS}}^2 - \| E \|_{\mathrm{op}} \cdot \| \widehat{f}(\pi ) \|_{\mathrm{HS}}^2 \\ &\ge \| \widehat{f} (\pi ) \|_{\mathrm{HS}}^2 \left( u^2 \kappa_{\pi} - n |u|^3 |\lambda_{\pi}|^3 - n \frac{u^4}{4} |\lambda_{\pi}|^4 \right) \\ &\ge \| \widehat{f} (\pi ) \|_{\mathrm{HS}}^2 \cdot u^2 \kappa_{\pi} \left( 1 - n |u| \cdot |\lambda_{\pi}| - n \frac{u^2}{4} |\lambda_{\pi}|^2 \right) . \end{split}
\end{equation}
Let $M>0$ and $0<c<2(\sqrt{n^2+n}-n)$ be arbitrary, and choose $u=c/(nM)$. For any $|\lambda_{\pi}| \le M$ we then have
\[ 1 - n |u| \cdot |\lambda_{\pi}| - n \frac{u^2}{4} |\lambda_{\pi}|^2 \ge 1-c-\frac{c^2}{4n} >0, \]
and thus \eqref{upperbound} and \eqref{lowerbound} imply
\[ \begin{split} n g \left( \frac{c}{nM} \right)^2 &\ge \sum_{\substack{\pi \in \widehat{G} \\ |\lambda_{\pi}| \le M}} d_{\pi} \mathrm{tr} \left( \sum_{k=1}^n \left( \pi (h_k)-I_{d_{\pi}} \right)^* \left( \pi (h_k)-I_{d_{\pi}} \right) \widehat{f}(\pi ) \widehat{f}(\pi )^* \right) \\ &\ge \sum_{\substack{\pi \in \widehat{G} \\ |\lambda_{\pi}| \le M}} d_{\pi} \| \widehat{f} (\pi ) \|_{\mathrm{HS}}^2 \left( \frac{c}{nM} \right)^2 \kappa_{\pi} \left( 1-c-\frac{c^2}{4n} \right) . \end{split} \]
Since $0<c<2(\sqrt{n^2+n}-n)$ was arbitrary, the claim follows.
\end{proof}

\begin{proof}[Proof of Theorem \ref{WgBerryEsseen}] From Propositions \ref{smoothingprop} and \ref{fourierdecayprop} and the Cauchy--Schwarz inequality we get that for any $f \in \mathcal{F}_g$ and any real number $M \ge |2 \rho^+|+a$,
\[ \begin{split} \bigg| \int_G &f \, \mathrm{d}\nu_1 - \int_G f \, \mathrm{d}\nu_2 \bigg| \\ &\le \psi (M) + \left( \sum_{\substack{\pi \in \widehat{G} \\ 0<|\lambda_{\pi}|<M}} d_{\pi} \kappa_{\pi} \| \widehat{f}(\pi ) \|_{\mathrm{HS}}^2 \right)^{1/2} \left( \sum_{\substack{\pi \in \widehat{G} \\ 0<|\lambda_{\pi}|<M}} \frac{d_{\pi}}{\kappa_{\pi}} \| \widehat{\nu_1}(\pi) - \widehat{\nu_2}(\pi) \|_{\mathrm{HS}}^2 \right)^{1/2} \\ &\le \psi (M)+\phi (M) \left( \sum_{\substack{\pi \in \widehat{G} \\ 0<|\lambda_{\pi}|<M}} \frac{d_{\pi}}{\kappa_{\pi}} \| \widehat{\nu_1}(\pi) - \widehat{\nu_2}(\pi) \|_{\mathrm{HS}}^2 \right)^{1/2} . \end{split} \]
By Kantorovich duality, the same upper bound holds for $W_g (\nu_1, \nu_2)$.
\end{proof}

\subsection*{Acknowledgments}

The author is supported by the Austrian Science Fund (FWF), project Y-901. I would like to thank Daniel El-Baz, Florian Pausinger and Szil\'ard R\'ev\'esz for helpful discussions.

\end{document}